\theoremstyle{plain}
\newtheorem{theorem}{Theorem}[section]
\newtheorem{lemma}[theorem]{Lemma}
\newtheorem{corollary}[theorem]{Corollary}
\theoremstyle{definition}
\newtheorem{definition}[theorem]{Definition}
\newtheorem{example}[theorem]{Example}
\theoremstyle{definition}
\theoremstyle{remark}
\newtheorem{remark}{Remark}
\def\@seccntformat#1{\csname the#1\endcsname.\quad}
\def\section{\@startsection{section}{1}{\z@}%
{-3.5ex \@plus -1ex \@minus -.2ex}%
{2.3ex \@plus.2ex}%
{\Large\bf}}
\def\subsection{\@startsection{subsection}{2}{\z@}%
{-3.25ex\@plus -1ex \@minus -.2ex}%
{1.5ex \@plus .2ex}%
{\bf\large}}
\def\subsubsection{\@startsection{subsubsection}{3}{\z@}%
{-3.25ex\@plus -1ex \@minus -.2ex}%
{1.5ex \@plus .2ex}%
{\normalfont\normalsize\bf}}
\def\blfootnote{\xdef\@thefnmark{}\@footnotetext}
\newcommand{\R}{\mathbb{R}}
 \newcommand{\be}{\begin{equation}}
 \newcommand{\ee}{\end{equation}}
 \newcommand{\bd}{\begin{displaymath}}
 \newcommand{\ed}{\end{displaymath}}
 \newcommand{\bea}{\begin{eqnarray}}
 \newcommand{\eea}{\end{eqnarray}}
 \newcommand{\beas}{\begin{eqnarray*}}
 \newcommand{\eeas}{\end{eqnarray*}}
 \newcommand{\bc}{\begin{center}}
 \newcommand{\ec}{\end{center}}
 \newcommand{\pa}{\partial}
 \def \O{\Omega}
   \def \S{\Sigma}
 \title{Numerical approximation of one phase quadrature domains}
\author{Mahmoudreza Bazarganzadeh}
\address{Department of Mathematics, Stockholm University,
S-10691, Stockholm, Sweden}
\email{mahmoudreza@math.su.se}
\thanks{M. Bazarganzadeh thanks Stockholm University for supporting to  visit IST, Lisbon.}
\author{Farid Bozorgnia}
\address{Department of Mathematics, Instituto Superior T\'{e}cnico, Lisbon.}
\email{bozorg@math.ist.utl.pt}
\thanks{F. Bozorgnia  was supported by the UT Austin-Portugal partnership through
the FCT post-doctoral fellowship
SFRH/BPD/33962/2009 and grants PTDC/MAT/114397/2009, UT Austin/MAT/0057/2008}
\keywords{Quadrature domain, Free boundary problem, Level set method, Shape optimization.}
\begin{document}
\begin{large}
\vskip12mm

\maketitle

\renewcommand{\theequation}{1.\arabic{equation}}
\setcounter{equation}{0}

\begin{abstract}
In this work, we present two numerical schemes for    a  free boundary problem called one phase quadrature domain. In the first method by
applying the proprieties of given free boundary  problem, we derive a  method that leads to a fast iterative  solver. The iteration procedure is adapted in order to
work in the case when  topology changes.   The  second method is based on shape reconstruction 
to establish an efficient Shape-Quasi-Newton-Method.
   ~Various numerical experiments confirm the efficiency
of the derived numerical methods.


\end{abstract}
\tableofcontents

\section{Introduction.}

In this paper we shall consider general mathematical  approaches to solve the free boundary problems of type
\begin{equation}\label{shape11}
A(u, \Omega) = 0,
\end{equation}
\begin{equation}\label{shape2}
B(u, \Omega) = 0.
\end{equation}
Here $A$ corresponds to a well posed elliptic boundary value problem in an unknown domain  $\Omega=\{x\,:\,u(x)>0\}=\{u>0\}$ and $B$  operates on the
functions supported at the free boundary $\Gamma=\pa \O$. It is supposed that function $u$ can be solved from equation (\ref{shape11}) for any given
suitable domain  $\Omega.$  More precisely, in this paper we consider the following problem:
\be\label{one phase for sub1}
 \begin{cases} \tag{P}
 \Delta u= \chi _{\O} -\mu, & \mbox{in}  \quad\,\, \R^N, \\
 u\geq 0, &\mbox{in} \quad \R^N, \\
u=0 ,&\mbox{in} \quad \R^N\setminus \Omega,
 \end{cases}
\ee
where $\mu$ is a given measure with compact support. Our aim in this work  is to study  systematic and efficient ways to solve Problem (P) numerically. The outline of the paper is as follows.

 In section 2, we present  some basic facts and mathematical background of quadrature domains. In section 3, we
investigate one of the applications of  quadrature domains, Hele Shaw flow. Section 4 is devoted to derive a numerical scheme which is based on the properties of the free boundary especially blow up techniques. 
In section 5 we  construct another  numerical scheme for Problem (P) based on shape reconstruction formulation.
Finally, in last section we investigate some numerical examples which show the efficency of the numerical algorithms.

\section{Notations and mathematical background of quadrature domains.}
\renewcommand{\theequation}{2.\arabic{equation}}
\setcounter{equation}{0}
Let us review some notations that we use here. By $\Omega$ we mean an open subset of $\R^N$ and $L^p(\O)$ the usual Lebesgue space with respect to the Lebesgue measure.  $HL^p(\O)$ denote the subspace of $L^p(\O)$ that consists of harmonic functions and $SL^p(\O)$ for the subspace of $L^p(\O)$ that consists of
subharmonic functions. We also show the characteristic function of $\O$ by $\chi_\O$ and $G$ always denotes the usual ''fundamental solution'' for the Laplace
operator in $\R^N$. In other words, for $x \in \R^N\setminus \{0\}$
\beas
  G(x)=  \begin{cases}
  \frac{1}{N(N-2)\omega_N} |x|^{2-N},&{\rm for ~~~} N \geq 3,\\
  -\frac{1}{2\pi} \ln |x|, &{\rm  for ~~~} N = 2,
  \end{cases}
\eeas
where $\omega_N$ denotes the volume of the unit ball in $\R^N$.

It is immediately verified that  if $\O$ is open and bounded then as function of $x\in \O$,
\beas\label{sub ineq}
&&G(x-y)\in HL^1(\O),\quad\,\, \text{for all } y\in \O^c,\\
&&-G(x-y)\in SL^1(\O),\quad\,\, \text{for all } y\in \O.
\eeas
\begin{definition}
Let  $\mu$ be a measure with compact support. By a \textit{subharmonic quadrature domain}  we mean an open connected set $\O\subset \R^N$   such that $supp(\mu)\subset \O$ and
\be\label{qi}
\int_\O h\,dx\geq\int\,h\,d\mu,
\ee
holds for all $h\in SL^1(\O)$. We write $\O\in Q(\mu,SL^1)$  if (\ref{qi}) holds and $\mu(\O)<\infty$.
\end{definition}

If one consider $\int_\O h\,dx=\int\,h\,d\mu$ for all $h\in HL^1(\O)$ then $\O$ is a \textit{quadrature domain} and we  write $\O\in Q(\mu,HL^1)$.

 The simplest quadrature domain is a circular disc. Suppose that  $\mu=\alpha \delta_0$ where $\delta_0$ is a  Dirac mass at origin and $\alpha>0$. Then
 $$Q(\mu,HL^1)=Q(\mu,SL ^1)=\{B(0;r)\},$$
 where $r\geq 0$ is determined by $|B(0;r)|=\alpha$, (see \cite{bjorn1}).
  Generally if $\O$ is a bounded domain in $\R^N$ and
\begin{equation}\label{mean value}
 \int_{\O}~h dx= |\O|\,h(x_0),
\end{equation}
 holds for all $h\in HL^1(\O)$, where $x_0$ is an arbitrary point, then $\O$ is a ball centered at $x_0$.

 In this article we  deal only with subharmonic quadrature domain, and from now on by a quadrature domain we mean a subharmonic quadrature domain.

Let  $U^\mu$,  the Newtonian
 potential of the measure $\mu$ which is defined by
 \beas
 U^\mu(x) := (G*\mu)(x) = \int_{\R^N} G(x-y) d\mu(y),~~~x \in \R^N,
 \eeas
and it satisfies the Poisson's equation $-\Delta U^\mu=\mu$ in the distribution sense. For the sake of simplicity, we shall use $U^{\O}$ instead of $U^{\chi_\O}
$.

 Gustafsson in  \cite{bjorn1} has showed that  $\Omega \in Q(\mu, SL^1) $ if and only if
\be\label{q1} \begin{cases}
  U^{\Omega}  \leq  U^\mu, & \hbox{in}\,\R^N,\\
   U^{\Omega}  =    U^\mu, & \hbox{in}\, \R^N \setminus \Omega.
  \end{cases}
 \ee
 Also if one considers $u = U^\mu - U^{\Omega}\geq 0$, then
\be \label{1-phase-1}
 \Delta u= \chi _\Omega -\mu \quad  \mbox{in}  \quad \R^N.
 \ee

Note that from (\ref{1-phase-1}) one has $\Delta u= \chi _\Omega$ away from $supp(\mu)$ and according to the results in local regularity of solutions for elliptic
PDEs, we obtain $u\in W_{\text{loc}}^{2,p}(\R^N)$,  for every $1<p<\infty$. Also $\nabla u\in W_{\text{loc}}^{1,p}(\R^N)$. By the Sobolev embedding theorem the first derivatives
are therefore H\"{o}lder continuous with  H\"{o}lder exponent $\alpha<1$. 

Sakai in \cite{sa2} has  proved that the  definition of quadrature domain is equivalent to the well-known one-phase free  boundary problem in distribution scenes. More
precisely, from PDE point of view, $\O\in Q(\mu,SL^1)$ is equivalent to
 \be\label{one phase for sub}
 \begin{cases} \Delta u= \chi _\Omega -\mu, &\mbox{in}  \quad \R^N, \\
 u\geq 0,& \mbox{in} \quad \R^N,\\
u=0,& \mbox{in} \quad \R^N\setminus \Omega.
 \end{cases}
\ee

\begin{remark}\label{rem1} Suppose that $m$ denotes the Lebesgue measure. By (\ref{q1}) we know that $u=|\nabla u|=0$ in $\R^N\setminus \O$. Now taking  integration of (\ref{1-phase-1}) gives
$$0=\int_{\pa \O}\frac{\pa u}{\pa n}\, ds=\int_{\O}\Delta u\, dx\,=\,m(\O)-\mu(supp(\mu)).$$
This fact is also a consequence of (\ref{qi}). In other words, we know the volume of the solution priori.
\end{remark}

\begin{example} \label{example} As an other example of one phase quadrature domain, suppose that $x_0\in \R^N$ and $a>0,\,M>1$.
Let $B_1=B_ 1(x_0,a)$ and $\mu=M \chi_{B_1}$. Then  (\ref{one phase for sub}) reads
  \be\label{example 2 pde}
  \begin{cases}
  \Delta u=\chi_{\Omega}-M\chi_{B_1}, &\mbox{in}\,\, \Omega,\\
  u=|\nabla u|=0,& \mbox{in}\,\, \Omega^c.
  \end{cases}
  \ee

The spherical symmetry of the problem shows that the we have to find a radial solution $u=u(|x|)$ for (\ref{example 2 pde}). Consequently, we suppose that $\Omega=B_
2=B_ 2(x_0,r)$ for some $r>a$. To make more easier we consider that $u=u_ 1$ on $B_1$ and $u=u_ 2$ on $B_2\setminus B_1$. We desire to patch $u_ 1$ and $u_ 2$
 on $\pa B_1$ without loosing regularity. Therefore our problem is
 \be
 \Delta u = \begin{cases}
1-M,& \mbox{in}~~B_1,\\
 1,& \mbox{in}~~B_2\setminus B_1 ,
\end{cases}
\ee
with the following conditions
 \be
 \begin{cases}\label{conditions}
 u_ 1=u_ 2,\,\,\nabla u_ 1=\nabla u_ 2,& \mbox{in}~~B_1,\\
  u_ 2=|\nabla u_ 2|=0& \mbox{in}~~(B_2)^c.
   \end{cases}
\ee
 By some calculations and considering the fundamental solution for Laplacian operator one has
 \be
 u(x) = \begin{cases}
 (1-M)\frac{|x-x_0|^2}{2N}+A_1,& \mbox{in}~~B_1,\\
 \frac{|x-x_0|^2}{2N}+A_2|x-x_0|^{2-N}+A_3,& \mbox{in}~~B_2\setminus B_1,\\
 0,& \mbox{in}~~(B_2)^c,
 \end{cases}
 \ee
 where $A_1, A_2, A_3$ are appropriate constants which are computed with respect to (\ref{conditions}). Now we obtain
 \be\label{solution exa2}
 u(x) =  \begin{cases}
 (1-M)\frac{|x-x_0|^2}{2N}+\frac{r^2-M a}{2(2-N)},& \mbox{in}~~B_1,\\
 \frac{|x-x_0|^2}{2N}-\frac{r^N|x-x_0|^{2-N}}{N(N-2)}+\frac{r^2}{2(2-N)},& \mbox{in}~~B_2\setminus B_1,\\
 0,& \mbox{in}~~(B_2)^c,
 \end{cases}
 \ee
 where $r=M^\frac{1}{N}a$.

 For $N=2$ we can replace $|x-x|^{2-N}$ by $\log |x-x_0|$ in (\ref{solution exa2}) and we derive that
 \be\label{solution exa2 in 2 dim}
 u(x) = \begin{cases}
 (1-M)\frac{|x-x_0|^2}{4}+\frac {a^2}{4}M+\frac{r^2}{2}(\log \frac{a}{r}-\frac{1}{2}\big),& \mbox{in}~~B_1,\\
 \frac{|x-x_0|^2}{4}+\frac{r^2}{2}\log|x-x_0|-\frac{r^2}{2}\big(\log r+\frac{1}{2}\big),& \mbox{in}~~B_2\setminus B_1,\\
 0,& \mbox{in}~~(B_2)^c,
 \end{cases}
 \ee
 with $r=M^\frac{1}{2}a$.
\end{example}

\begin{remark}
Suppose that $N=2,\, \mu=\delta_0$.  Let $B(0,\epsilon)$  be an approximation of $supp(\mu)$ with   $M=\frac{1}{\pi \epsilon^2}$ for $\epsilon$ small enough. Then one can obtain $r=\frac{1}{\sqrt{\pi}}$.
\end{remark}

\subsection{An estimate of quadrature domain.}
 In Problem (P) the domain $\Omega$  is  part of the solution and in order to generate a mesh, one needs to find a domain which  contains $ \Omega$. To do this  we
 find a bigger domain such that $\Omega$ is embedded in it as follows.

 By $r(\mu)$ we mean a positive number
corresponding to the positive measure $\mu$ such that $$|B_{r(\mu)}|=m(B_{r(\mu)})=\int_{\R^N }\,d\mu=\mu(\R^N),$$
where $m$ denotes the Lebesgue measure in $\R^N$. The following theorem is due to Sakai, see \cite{sa3}.

\begin{theorem}\cite{sa3}\label{estimate} Let $\mu$ be a finite positive measure with support in the closed  ball $\overline{B_R},\,R>0$. Then every quadrature domain $\O$
of
$\mu$ for subharmonic functions satisfies \be \O\subset B_{r(\mu)+R}. \ee Furthermore,  if $r(\mu)> 2R$ then
\beas
B_{r(\mu)-R}\subset \O.
\eeas \end{theorem}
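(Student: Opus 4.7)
The plan is to prove the two inclusions separately, in both cases by invoking the potential-theoretic formulation \eqref{q1} together with the monotonicity principle for subharmonic quadrature domains: if $\mu_1,\mu_2$ are positive measures of compact support satisfying $\int h\,d\mu_1\le\int h\,d\mu_2$ for every nonnegative $h\in SL^1$, then the (minimal) quadrature domains obey $\Omega_{\mu_1}\subseteq\Omega_{\mu_2}$. I would take this principle as given (it follows from Gustafsson's partial balayage construction, which also identifies the canonical $\Omega$ for a given $\mu$), and reduce each inclusion to a comparison with the Lebesgue measure on an explicit ball.

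For the outer inclusion $\Omega\subset B_{r(\mu)+R}$ the strategy is to dominate $\mu$ in the subharmonic sense by Lebesgue measure on $B_{r(\mu)+R}(0)$. For every $h\ge 0$ subharmonic and every $y\in\overline{B_R}$, the mean value inequality yields
\[
h(y)\;\le\;\frac{1}{|B_{r(\mu)}|}\int_{B_{r(\mu)}(y)} h\,dx \;\le\;\frac{1}{|B_{r(\mu)}|}\int_{B_{r(\mu)+R}(0)} h\,dx,
\]
the second step because $B_{r(\mu)}(y)\subset B_{r(\mu)+R}(0)$ whenever $|y|\le R$. Integrating against $d\mu$ and using $\mu(\R^N)=|B_{r(\mu)}|$ (Remark \ref{rem1}) gives $\int h\,d\mu\le\int_{B_{r(\mu)+R}(0)} h\,dx$. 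Since Lebesgue measure on $B_{r(\mu)+R}(0)$ has $B_{r(\mu)+R}(0)$ as its canonical quadrature domain, the monotonicity principle forces $\Omega\subset B_{r(\mu)+R}(0)$.

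For the inner inclusion $B_{r(\mu)-R}\subset\Omega$ under the assumption $r(\mu)>2R$, the hypothesis guarantees $B_{r(\mu)-R}\supset\overline{B_R}\supset\operatorname{supp}(\mu)$, leaving enough room for a symmetric comparison. The plan is to construct $\tilde\mu$ supported in $\overline{B_R}$, dominated by $\mu$ in the subharmonic sense, whose canonical quadrature domain is exactly $B_{r(\mu)-R}(0)$. The natural candidate is the uniform measure $\tilde\mu=(|B_{r(\mu)-R}|/|B_R|)\chi_{\overline{B_R}}$; taking $M=|B_{r(\mu)-R}|/|B_R|>1$ and $a=R$ in Example \ref{example} identifies $\Omega_{\tilde\mu}=B_{M^{1/N}a}=B_{r(\mu)-R}$. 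Monotonicity then delivers $B_{r(\mu)-R}=\Omega_{\tilde\mu}\subset\Omega$.

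The main obstacle is establishing the subharmonic domination $\int h\,d\tilde\mu\le\int h\,d\mu$ in the inner step. Unlike the outer case, the na\"ive mean value argument goes the wrong way for concentrated $\mu$ (point-mass-like examples show that a direct bound fails for certain subharmonic $h$ which vanish at the centers of mass of $\mu$), so one cannot simply copy the outer proof. The fix is to invoke partial balayage: the measure $\tilde\mu$ obtained by sweeping $\mu$ onto $\overline{B_R}$ against all positive subharmonic test functions and then comparing with the explicit radial solution in Example \ref{example} delivers the required domination. This is precisely the step whose full justification uses the deeper structural results of Sakai \cite{sa3}; once accepted, the two inclusions follow as outlined.
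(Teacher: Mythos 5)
First, a point of comparison: the paper gives no proof of Theorem \ref{estimate} at all --- it is quoted from Sakai \cite{sa3} --- so your proposal must stand on its own. For the outer inclusion, your sub-mean-value computation is correct and does yield $\int h\,d\mu\le\int_{B_{r(\mu)+R}}h\,dx$ for nonnegative $h$ subharmonic on $B_{r(\mu)+R}$, using $\mu(\R^N)=|B_{r(\mu)}|$. But the conclusion then rests entirely on the ``monotonicity principle'' you take as given, and in the form you need it --- domination tested only against the \emph{nonnegative} cone of subharmonic functions (whereas the quadrature inequality (\ref{qi}) must hold for all of $SL^1(\Omega)$), applied to \emph{every} $\Omega\in Q(\mu,SL^1)$ rather than the canonical domain produced by partial balayage, and yielding genuine set inclusion rather than inclusion up to null sets --- it is a theorem of essentially the same depth as the estimate being proved. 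So this half is a plausible reduction, not a proof.

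The inner inclusion is where the argument genuinely fails. The domination $\int h\,d\tilde\mu\le\int h\,d\mu$ with $\tilde\mu=\bigl(|B_{r(\mu)-R}|/|B_R|\bigr)\chi_{\overline{B_R}}$ is false for admissible $\mu$: take $\mu=|B_{r(\mu)}|\,\delta_{Re_1}$ and $h(x)=(R-x_1)_+$, which is nonnegative, convex, hence subharmonic, and vanishes at $Re_1$, so $\int h\,d\mu=0$ while $\int h\,d\tilde\mu>0$. (The theorem's conclusion still holds for this $\mu$, since its quadrature domain $B_{r(\mu)}(Re_1)$ does contain $B_{r(\mu)-R}(0)$; the failure is in your intermediate comparison, not in the statement.) Your proposed repair --- ``sweeping $\mu$ onto $\overline{B_R}$'' --- is vacuous because $\mu$ is already supported there, and you then defer the justification to ``the deeper structural results of Sakai \cite{sa3},'' i.e., to the very source whose theorem is being proved. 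This is circular. The inner estimate is precisely the hard, sharp content of \cite{sa3}: a direct comparison of $U^\mu(z)=U^{\Omega}(z)$ at a point $z\notin\Omega$ against the symmetrized potential of a ball of volume $|B_{r(\mu)}|$ only produces a constant strictly smaller than $1$ in front of $r(\mu)$, so no soft comparison of the kind sketched here can close the gap. As it stands, the second half of the theorem is not proved.
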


For instance let $\mu=g(x)\chi_{B_1}$, where $g$ is a positive function with $M=\sup_{B_1}g(x) $, then $\O\subset B_{\sqrt{M}+1}$.

For more information about one phase quadrature domain see \cite{bjorn1, gp, gs, sa2, sakai1}.

\section{An application (Hele Shaw flow).}
\renewcommand{\theequation}{3.\arabic{equation}}
\setcounter{equation}{0}
 One application of Problem (P) appears in Laplacian growth like Hele-Shaw flow which comes up in flow's dynamic. Here we describe the Hele-Shaw problem briefly.

Suppose that some incompressible fluid has been confined between two parallel plate and we inject more fluid by moderate velocity to it. Therefore the fluid between plates begin to occupy more
 space. We are interested in to study the behavior of the boundary of the fill space.

To be precise,  let $\nu$ be a positive, finite and non zero measure with compact support and $supp(\nu) \subseteq
D$ where $D$ is an open subset of $\R^N$  by a $C^1$ boundary. 
Let  $p_D$ be a super harmonic function such that
\begin{equation}\label{hele shaw}
\begin{cases}
 -\Delta p_D= \nu & \hbox{in}\, D,\\
p_D=0 & \hbox{on} \, \partial D.
\end{cases}
\end{equation}
We are looking for a family of regions $D_t$ for $t\geq 0$, such that $\partial D_t$  moves with the velocity  $-\nabla p_{D_t}$. This problem was introduced by S. Richardson \cite{r}.

\begin{definition} Suppose that $I$ is an interval in $\R$. Let $\mu=\chi_{D_0}+t\nu$, $t\in I$. A map $ t\rightarrow D_t\subset \R^N$ is a \textit{weak
solution} of the free boundary problem if the function $u_t\in H^1{(\R^N)}$ defined by
\begin{equation}\label{weak solution}
\Delta u_t=\chi_{D_t}-\mu,
\end{equation}
satisfies the following conditions:

\begin{itemize}\label{hele shaw2}
\item $u_t\geq 0,$

 \item $\int u_t(1-\chi_{D_t})\,dx=0.$

 \end{itemize}
 Last condition guarantee that $u_t=0$ in $\R^N\setminus D_t$, see \cite{gus1}.
\end{definition}

\begin{remark}  PDE (\ref{weak solution}) together with the above conditions  are a special case of Problem (\ref{one phase for sub1}).
\end{remark}
Next theorem states the corresponding quadrature domain  of the solution of the Hele-Shaw problem.
\begin{theorem}\cite{gus1}
Suppose that $\mu$ and $D_0$ are as before and $T>0$. Then there exists a weak solution
$$[0,T]\ni t\rightarrow D_t\subset \R^N,$$
for Hele Shaw problem which is unique and if $u_t$ be the function appearing in (\ref{weak solution}), then $u_t$ is also unique and $$u_t=\int_{0}^{t}p_{D_{\tau}}\,d\tau.$$
Moreover, $D_t$ can be chosen to be $$D_t=D_0\cup \{z:u_t(z)>0\}.$$
\end{theorem}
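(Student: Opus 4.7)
The plan is to break the proof into four steps: existence and uniqueness at each fixed $t$, monotonicity of the family $\{D_t\}$, derivation of the integral formula, and characterization of $D_t$ in terms of $u_t$.

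First, fix $t\in[0,T]$ and set $\mu_t:=\chi_{D_0}+t\nu$, a finite positive compactly-supported measure. The equivalence between subharmonic quadrature domains and the one-phase free-boundary problem (\ref{one phase for sub}), recalled in Section~2 via (\ref{q1}) and Sakai's theorem, produces an $\O\in Q(\mu_t,SL^1)$ together with the unique nonnegative function $u_t:=U^{\mu_t}-U^{\O}$ solving (\ref{weak solution}) and vanishing outside $\O$. Setting $D_t:=\O$ gives existence; pointwise uniqueness of $u_t$ is built into the obstacle-problem characterization, while $D_t$ is unique up to a null set and is then pinned down by the concluding formula in the theorem.

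Second, I would prove monotonicity: $s\le t$ implies $D_s\subseteq D_t$ and $u_s\le u_t$. Since $\mu_s\le\mu_t$, the defining inequality (\ref{q1}) for the quadrature domain (or equivalently a comparison in the associated obstacle problem) yields both the nesting of domains and the pointwise comparison of potentials. In particular $D_0\subseteq D_t$ for every $t$.

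The main step is the integral formula. For $s<t$, subtract the PDEs for $u_t$ and $u_s$ and use $\chi_{D_t}-\chi_{D_s}=\chi_{D_t\setminus D_s}$ (from the nesting) to see that the difference quotient $(u_t-u_s)/(t-s)$ satisfies $-\Delta[(u_t-u_s)/(t-s)]=\nu$ on the interior of $D_s$. The delicate point is the boundary behaviour: on $\pa D_s$ one has $u_s=0$, while the $C^{1,\a}$ regularity of $u_t$ together with the nondegeneracy for the one-phase problem cited after (\ref{1-phase-1}) forces $u_t=o(t-s)$ uniformly on $\pa D_s$, because $\pa D_t$ sits within distance $O(t-s)$ of $\pa D_s$ and $u_t$ vanishes quadratically in the distance to its own free boundary. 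Passing $t\downarrow s$ and invoking uniqueness for the Dirichlet problem (\ref{hele shaw}) on $D_s$ identifies the limit with $p_{D_s}$, so that $\pa_t u_t=p_{D_t}$; integrating from $0$ with $u_0\equiv 0$ yields $u_t=\int_0^t p_{D_\tau}\,d\tau$. The final identity $D_t=D_0\cup\{u_t>0\}$ then follows: the inclusion $\supseteq$ is immediate from $u_t=0$ outside $D_t$ combined with $D_0\subseteq D_t$, while for $x\in D_t\setminus D_0$ the strong maximum principle applied to (\ref{hele shaw}) on each $D_\tau$ containing $x$ gives $p_{D_\tau}(x)>0$, and integrating in $\tau$ produces $u_t(x)>0$. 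The main obstacle is precisely this time-differentiation argument: justifying the pointwise identity $\pa_t u_t=p_{D_t}$ rests on a boundary-velocity control for $\pa D_t$, which in turn is a regularity statement for the one-phase obstacle problem; without it, one can only read off the integral identity in a distributional sense.
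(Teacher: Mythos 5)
The paper does not prove this theorem; it is quoted verbatim from Gustafsson's article \cite{gus1}, so your attempt has to be measured against the argument in that source rather than against anything in the text. Your skeleton --- existence and uniqueness at fixed $t$ via the obstacle-problem/quadrature-domain formulation, monotonicity of $t\mapsto (D_t,u_t)$, then the integral formula, then the set identity --- is the right one, and steps one, two and four are essentially sound (for the last, note that since $D_t$ is determined only up to a null set, the identity $D_t=D_0\cup\{u_t>0\}$ is really the \emph{choice} of representative announced by the words ``can be chosen to be,'' not a fact to be derived about an arbitrary representative).

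The genuine gap is in your derivation of $u_t=\int_0^t p_{D_\tau}\,d\tau$. You reduce it to the pointwise identity $\pa_t u_t=p_{D_t}$, and to control the boundary values of the difference quotient you assert that $\pa D_t$ lies within distance $O(t-s)$ of $\pa D_s$, so that quadratic decay of $u_t$ near its own free boundary gives $u_t=o(t-s)$ on $\pa D_s$. That Lipschitz-in-time bound on the free boundary is not available at this level of generality: a priori one only knows the volume increment $m(D_t\setminus D_s)=(t-s)\nu(\R^N)$, and locally the boundary can move arbitrarily fast (for instance at the instant two components of $D_t$ merge --- exactly the topology changes this paper is concerned with). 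So the step you yourself flag as the main obstacle is not merely unjustified; the auxiliary estimate it rests on fails in general. The classical proof avoids time-differentiation entirely. For $s<t$ one proves the two-sided comparison $(t-s)\,p_{D_s}\le u_t-u_s\le (t-s)\,p_{D_t}$: the function $w:=u_t-u_s-(t-s)p_{D_t}$ satisfies $\Delta w=\chi_{D_t\setminus D_s}\ge 0$ in $D_t$ and $w=0$ on $\pa D_t$, hence $w\le 0$ by the maximum principle; while $u_t-u_s-(t-s)p_{D_s}$ is harmonic in $D_s$ with boundary values $u_t\ge 0$, hence nonnegative. Summing these inequalities over a partition of $[0,t]$ traps $u_t(x)$ between two Riemann sums of $\tau\mapsto p_{D_\tau}(x)$, which is monotone (again by the maximum principle, since $D_s\subseteq D_t$) and therefore Riemann integrable; letting the mesh tend to zero gives the formula. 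This elementary comparison argument is what you should substitute for your regularity-dependent limit.
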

For more information about Hele shaw see \cite{gus1}, \cite{r}, \cite{gv} and references therein. 

\section{First numerical  method to approximate the solution of Problem (P).}
\renewcommand{\theequation}{4.\arabic{equation}}
\setcounter{equation}{0}
In this part by applying the properties of free boundary problem, we construct an algorithm that leads us to a fast iterative solver. The level set method
is next employed to evolve the interface in the direction of the normal velocity field.

Consider Problem (\ref{one phase for sub1}) in dimension one.  Our motivation for the first  method is based on the fact that for any $x$ outside of the $\mbox{supp}(\mu)$
one has
\[
u'(x)=\pm \sqrt{2u}.
\]
 To be more precise,  one has
 \be\label{lap u 1}
 \Delta u= 1,  \quad \text{in} \,\,{\{x:u(x) >0}\} \setminus  supp(\mu).
 \ee
  Let $x_f$ be a free boundary point. Multiply (\ref{lap u 1}) by $u'$
 and integrate over $[x \, ,\, x_f]$ to find that
\[
\frac{1}{2}(u')^{2}(x)=u(x).
\]
Let  $(c,d)$ be an  initial guess for ${\{x: u(x)>0}\}$ which contains the support of measure
$\mu$. Next  we solve the following boundary value problem
\be\label{ff1}
 \begin{cases}  u''=1 -\mu &  \mbox{in}  \quad (c, d), \\
u'(c)=\sqrt{2u(c)} , & u'(d)=-\sqrt{2u(d)}.
 \end{cases}
\ee
Then to get the free boundary points, we move the  points  $c, d$  in  the normal direction with speeds  $\sqrt{2u(c)}$ and  $\sqrt{2u(d)}$, i.e,
\[
d_{f}=d+\sqrt{2u(d)}, \quad   c_{f}=c-\sqrt{2u(c)},
\]
 where $c_f$ and $d_f$ are free boundary points. Note that in this case  we   need only one iteration, see section 4.2.
 \begin{remark}
 To have existence for the boundary value problem  (\ref{ff1}) one has  to choose $(c,d)$ close enough  to $\text{supp}(\mu)$.
 \end{remark}

\subsection{Blow up techniques and the main idea.}

In higher dimensions we shall prove  that when we approach to the free boundary still the quotient $\frac{|\nabla u(x)|}{\sqrt{2u(x)}}$ goes to one. First,    we recall some known  properties and lemmas that have been proved  in \cite{psu}. We shall use them in the proof of  Theorem \ref{ff2}. The following lemma shows  the growth of $u$  away from the free boundary $\Gamma$.
\begin{lemma} \cite{psu}\label{Quadratic growth} Let $u \in  L^{\infty}_{\text{loc}}(\Omega)$,~$\Omega ={\{u >
0}\}$    be a solution of Problem (P). If  $ \, x_{0} \in \Gamma$  then
\[
\underset{ B_{r}(x_{0})}{\sup} u \leq  C  r^{2},
\]
where $C=C(N)$.
\end{lemma}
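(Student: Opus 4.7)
The plan is to prove this via the standard harmonic replacement and Harnack argument, which fits neatly with the PDE structure $\Delta u = \chi_\Omega - \mu$. First, I would observe that since $x_0 \in \Gamma = \partial\Omega$ and $\mathrm{supp}(\mu) \subset \Omega$, there is a positive distance $r_0 := \mathrm{dist}(x_0, \mathrm{supp}(\mu)) > 0$. Consequently, for $r < r_0$ the PDE in $B_r(x_0)$ simplifies to $\Delta u = \chi_\Omega$, an $L^\infty$ source term bounded by $1$. The constant $C(N)$ in the statement is understood as valid for $r \le r_0/2$ (or equivalent), which is enough for the blow-up purposes in Theorem \ref{ff2}.

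Next, I would introduce the harmonic replacement $h$ of $u$ in $B_r(x_0)$: i.e., $\Delta h = 0$ in $B_r(x_0)$ with $h = u$ on $\partial B_r(x_0)$. The difference $w := h - u$ satisfies $\Delta w = -\chi_\Omega \le 0$ in $B_r(x_0)$ and $w = 0$ on $\partial B_r(x_0)$. The maximum principle gives $w \ge 0$, so
\[
0 \le u \le h \quad \text{in } B_r(x_0).
\]
To control $h$ at $x_0$, I would compare $w$ with the explicit torsion function $W(y) := (r^2 - |y - x_0|^2)/(2N)$, which satisfies $\Delta W = -1 \le \Delta w$ and vanishes on $\partial B_r(x_0)$. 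The maximum principle applied to the superharmonic $W - w$ yields $w \le W$ pointwise, so in particular
\[
h(x_0) = h(x_0) - u(x_0) = w(x_0) \le W(x_0) = \frac{r^2}{2N}.
\]

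With that pointwise bound in hand, I would invoke Harnack's inequality for the non-negative harmonic function $h$ in $B_r(x_0)$: there is a dimensional constant $\kappa(N)$ such that $\sup_{B_{r/2}(x_0)} h \le \kappa(N)\, h(x_0)$. Combining with $u \le h$ and the previous estimate,
\[
\sup_{B_{r/2}(x_0)} u \;\le\; \sup_{B_{r/2}(x_0)} h \;\le\; \kappa(N)\, h(x_0) \;\le\; \frac{\kappa(N)}{2N}\, r^2.
\]
Replacing $r$ by $2r$ (still admissible provided $2r < r_0$) yields $\sup_{B_r(x_0)} u \le C(N)\, r^2$ with $C(N) = 2\kappa(N)/N$, which is the desired estimate.

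The main obstacle is really only bookkeeping: one must be sure $B_r(x_0)$ stays away from $\mathrm{supp}(\mu)$ so that $\Delta u = \chi_\Omega$ holds there, and one must verify that the theorem is being used at scales $r \le r_0/2$ (which is always the case in blow-up arguments). The two key ingredients — the explicit comparison with the torsion function to pin down $h(x_0)$, and Harnack's inequality to propagate this pointwise bound to a sup bound on the smaller ball — are both standard and require no delicate estimate beyond what the PDE directly provides.
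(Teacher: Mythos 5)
The paper does not actually prove this lemma---it is quoted verbatim from \cite{psu}---but your argument is correct and is essentially the standard proof of that quoted result: harmonic replacement $h$ of $u$ in $B_r(x_0)$, comparison of $w=h-u$ with the torsion function $(r^2-|y-x_0|^2)/(2N)$ to get $h(x_0)\le r^2/(2N)$ (using $u(x_0)=0$ at the free boundary point), and Harnack's inequality for the nonnegative harmonic $h$ to convert this into a sup bound. Your caveat that the estimate can only be asserted for $r$ small enough that $B_r(x_0)$ avoids $\mathrm{supp}(\mu)$ (so that $\Delta u=\chi_\Omega$ there) is the correct reading of the statement and is exactly the regime in which the lemma is used for the blow-up analysis.
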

\begin{corollary}  Let $u$  be as in Lemma \ref{Quadratic growth}.  Then
\[
 u(x)  \leq  C  dist(x, \partial \Omega)^{2}.
\]
\end{corollary}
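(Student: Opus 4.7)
The statement follows from Lemma \ref{Quadratic growth} by a standard "nearest-point" argument, and I do not expect any serious obstacle.

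My plan is as follows. Fix an arbitrary $x \in \Omega$, and set $d := \dist(x,\partial\Omega)$. If $d = 0$ the inequality is trivial (both sides vanish, since $u$ is continuous and vanishes on the free boundary), so assume $d > 0$. Because $\partial\Omega$ is closed, there exists a point $x_0 \in \partial\Omega = \Gamma$ with $|x-x_0| = d$.

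Now for any $r > d$ we have $x \in B_r(x_0)$, and $x_0 \in \Gamma$, so Lemma \ref{Quadratic growth} applies and yields
\[
u(x) \;\leq\; \sup_{B_r(x_0)} u \;\leq\; C\, r^2,
\]
with $C = C(N)$ independent of $r$. Letting $r \downarrow d$ gives $u(x) \leq C d^2 = C\, \dist(x,\partial\Omega)^2$, which is the desired estimate. Since $x \in \Omega$ was arbitrary, and since $u \equiv 0$ on $\R^N\setminus\Omega$ so the bound holds trivially there, the corollary follows.

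The only mildly delicate point is that Lemma \ref{Quadratic growth} is stated as a supremum over the open ball $B_r(x_0)$ while $x$ lies on its boundary when one takes $r = d$; this is precisely why I take $r > d$ and pass to the limit, which is legitimate because the constant $C$ in the lemma depends only on the dimension and not on $r$ or on the chosen free boundary point.
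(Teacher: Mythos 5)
Your argument is correct and is precisely the standard nearest-point argument that the paper implicitly relies on (the corollary is stated without proof there): pick the closest free boundary point $x_0$, apply Lemma \ref{Quadratic growth} on $B_r(x_0)$ for $r>\dist(x,\partial\Omega)$, and let $r$ decrease to the distance, using that $C=C(N)$ is independent of $r$ and $x_0$. Nothing is missing.
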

Also we  need the following   \textbf{Non degeneracy} property of the solutions.
 \begin{lemma} \label{non degeneracy} \cite{psu}
  Let $u$  be a solution of given free boundary problem, then we have the inequality
\[
\underset{ \partial B_{r}(x_{0})}{\sup}  \, u \geq  \frac{r^{2}}{8N}, \quad  \, \, \, \, \, \, \text{for any } x_0 \in \Gamma.
\]
\end{lemma}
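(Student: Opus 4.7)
The plan is to use a comparison argument against a shifted quadratic, which is the standard strategy for non-degeneracy in obstacle-type free boundary problems. Because $\operatorname{supp}(\mu)\subset \Omega$ is compact and $x_0\in\Gamma=\partial\Omega$, we may shrink $r$ so that $B_r(x_0)\cap\operatorname{supp}(\mu)=\emptyset$; in this ball (P) reduces to $\Delta u=\chi_\Omega$, so in particular $u$ is subharmonic on $B_r(x_0)$. By the maximum principle it therefore suffices to exhibit a single point $x^{*}\in B_r(x_0)$ at which $u(x^{*})\ge r^{2}/(8N)$.

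Since $x_0\in\partial\Omega$, we can pick $y\in\Omega\cap B_{r/2}(x_0)$ (if no such $y$ existed, a whole half-ball around $x_0$ would lie in $\Omega^c$, contradicting $x_0\in\overline{\Omega}=\overline{\{u>0\}}$). Define the auxiliary function
\[
w(x)=u(x)-\frac{|x-y|^{2}}{2N}.
\]
A direct computation gives $\Delta w=\Delta u-1=0$ on $\Omega\cap B_r(x_0)$, so $w$ is harmonic on the open set $D:=\Omega\cap B_{r/2}(y)\subset B_r(x_0)$. Note also that $w(y)=u(y)\ge 0$.

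Now I apply the maximum principle to $w$ on $D$. The boundary $\partial D$ splits into two pieces: the portion lying on $\Gamma$, where $u\equiv 0$ and so $w=-|x-y|^{2}/(2N)\le 0\le w(y)$, and the portion lying on $\partial B_{r/2}(y)\cap\overline{\Omega}$. The maximum of $w$ cannot be achieved on the first piece (strictly smaller than $w(y)$), hence
\[
\sup_{\partial B_{r/2}(y)\cap\overline{\Omega}} w \;\ge\; w(y)\;\ge\;0.
\]
This produces a point $x^{*}\in\partial B_{r/2}(y)\subset B_r(x_0)$ with
\[
u(x^{*}) \;\ge\; \frac{|x^{*}-y|^{2}}{2N}\;=\;\frac{(r/2)^{2}}{2N}\;=\;\frac{r^{2}}{8N}.
\]
Subharmonicity of $u$ on $B_r(x_0)$ then transfers this lower bound to $\partial B_r(x_0)$, completing the proof.

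The routine calculation is entirely linear: the only place one has to be slightly careful is the geometric setup, namely verifying both that $y\in\Omega$ can be found inside $B_{r/2}(x_0)$ (which uses $x_0\in\Gamma$) and that $B_{r/2}(y)\subset B_r(x_0)\subset\{x:\operatorname{dist}(x,\operatorname{supp}\mu)>0\}$ so that $\Delta w=0$ really holds throughout $D$. This is what forces the constant $1/(8N)$ rather than the sharper $1/(2N)$ one would get if one could center the comparison ball directly at $x_0$.
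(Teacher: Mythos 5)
Your proof is correct and is essentially the standard non-degeneracy argument from the cited reference \cite{psu}; the paper itself offers no proof of this lemma, only the citation, so there is nothing to diverge from. Two small points worth tightening: since $y\in\Omega=\{u>0\}$ you actually have $w(y)=u(y)>0$ strictly, which is what licenses your claim that the maximum of $w$ cannot sit on the piece of $\partial D$ lying in $\Gamma$ (where $w<0$); and your argument, like the original, implicitly requires $r$ small enough that $B_r(x_0)$ avoids $\operatorname{supp}\mu$, a restriction the paper's statement omits but which is needed for $\Delta u=\chi_\Omega$ and hence for both the harmonicity of $w$ and the final subharmonicity step.
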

\begin{definition}
 (Local solutions) For given $R,M > 0,$  and $ x_0 \in  \Gamma,$  let $ P_{R}(x_0,M)$
be the class of $C^{1,1}$ solutions $u$ of Problem (P)  in $B_{R}(x_0)$ such that
\[
|Du(x)-Du(y)|\leq M|x-y|,\qquad \text{for any } x,y\in\R^N.
\]
In the case $x_0 = 0$  we also set $P_{R}(M) = P_{R}(0,M).$
\end{definition}

In the above definition if $R = \infty$ then we get solutions in the entire space $\R^{N}$ and grow quadratically at infinity, which are called  \textit{global solutions}.

If $u \in  P_R(x_0,M)$   and $ \lambda  > 0,$  then the  proper re-scaling of $u$  at $x_0$ is defined by
\[
u_{x_{0}, \lambda}(x)=\frac{u(x_{0}+ \lambda x)-u(x_{0})}{\lambda^2}.
\]
Note that by using non degeneracy, Lemma \ref{non degeneracy}, and quadratic growth properties, Lemma \ref{Quadratic growth},    it can be shown that when
$\lambda \rightarrow 0$  then
$$u_{x_0, \lambda}\rightarrow u_{0}\quad\mbox{in}\,\, C_{\text{loc}}^{1,\alpha}(\R^N)\,\, \mbox{for any}\,\,0<\alpha<1,$$
where $u_0\in C_{\text{loc}}^{1,1}(\R^N)$.
 This $u_{0}$ is called a \textit{blow up} of $u$  with fixed center $x_0$  and also $u_{0}$  is a global solution, i.e,  $ u_{0} \in P_{\infty}(M).$ For  more details see  \cite{psu}.

\begin{theorem}\label{ff3}\cite{psu}
 (Blow up with fixed center). Let $u \in  P_R(x_0,M)$ be a solution of  Problem (P). Suppose that
\[
u_0(x) = \lim _{j\rightarrow \infty }  u_{x_{0},\lambda_{j}} (x), \quad x \in \R^N,
\]
for some sequence $\lambda_{j}\rightarrow  0$   as $j \rightarrow \infty .$ Then $u_0$ is homogeneous of degree two with
respect to the origin, i.e.
\[
u_{0}(\lambda x) =\lambda^{2} u_0(x),  \quad \text {  for    any  } x \in  \R^{N} \text { and } \lambda  > 0.
\]

\end{theorem}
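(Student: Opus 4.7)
The plan is to establish two-homogeneity of the blow-up $u_0$ via the \emph{Weiss monotonicity formula}, which is the canonical tool for this type of question in obstacle-type free boundary problems.

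First, to any $C^{1,1}$ solution $u$ of (P) and any free boundary point $x_0\in\Gamma$ I associate the Weiss energy
\[
W(r,u,x_0):=\frac{1}{r^{N+2}}\int_{B_r(x_0)}\bigl(|\nabla u|^2+2u\bigr)\,dx\,-\,\frac{2}{r^{N+3}}\int_{\partial B_r(x_0)}u^{2}\,dS.
\]
Since $x_0\in\Gamma$ lies outside $\operatorname{supp}(\mu)$, in a small ball around $x_0$ the equation reduces to $\Delta u=\chi_{\{u>0\}}$, which is the form needed for the Weiss computation. A direct integration-by-parts (carried out in \cite{psu}) then yields the monotonicity identity
\[
\frac{d}{dr}W(r,u,x_0)=\frac{2}{r^{N+4}}\int_{\partial B_r(x_0)}\bigl((x-x_0)\cdot\nabla u-2u\bigr)^2\,dS\geq 0,
\]
so that $W(0^{+},u,x_0):=\lim_{r\downarrow 0}W(r,u,x_0)$ exists and is finite (using the $C^{1,1}$ bounds $u\le Cr^2$, $|\nabla u|\le Cr$ near $x_0$).

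Next I exploit the fact that $W$ is invariant under the proper rescalings $u_{x_0,\lambda}$. A change of variables gives the scaling identity
\[
W(r,u_{x_0,\lambda},0)=W(\lambda r,u,x_0).
\]
Taking $\lambda=\lambda_j$ and passing to the limit as $j\to\infty$, using the $C^{1,\alpha}_{\mathrm{loc}}$ convergence $u_{x_0,\lambda_j}\to u_0$ recalled before the statement of the theorem, I conclude
\[
W(r,u_0,0)=\lim_{j\to\infty}W(\lambda_j r,u,x_0)=W(0^{+},u,x_0),
\]
which is \emph{independent of} $r>0$.

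Finally, since $r\mapsto W(r,u_0,0)$ is constant, applying the monotonicity identity to $u_0$ at the origin forces the integrand to vanish, i.e.\
\[
x\cdot\nabla u_0(x)-2u_0(x)=0\quad\text{for a.e.\ }x\in\R^N,
\]
and hence everywhere by continuity of $u_0$ and $\nabla u_0$. This is Euler's identity for functions homogeneous of degree two: for fixed $x\neq 0$, the function $t\mapsto t^{-2}u_0(tx)$ has vanishing derivative on $(0,\infty)$, so $u_0(\lambda x)=\lambda^2 u_0(x)$ for every $\lambda>0$, as desired.

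The main obstacle in this plan is the derivation of the monotonicity formula itself, which requires a careful integration by parts combined with the PDE; since it is already proved in \cite{psu}, which is cited throughout this section, it can be invoked rather than re-derived. A secondary technical point is justifying the passage to the limit in $W(\lambda_j r,u,x_0)$, but this follows from the strong convergence $u_{x_0,\lambda_j}\to u_0$ in $C^{1,\alpha}_{\mathrm{loc}}$ together with the uniform $C^{1,1}$ bound which controls the boundary integral on $\partial B_r$.
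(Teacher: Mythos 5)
Your proof is correct, and it is essentially the same argument as the source: the paper states this theorem without proof, citing \cite{psu}, and the Weiss monotonicity formula combined with the scaling identity $W(r,u_{x_0,\lambda},0)=W(\lambda r,u,x_0)$ and Euler's relation is precisely how the result is established there. The only points you lean on implicitly --- that $u(x_0)=0$ so the rescaling is purely multiplicative, that $\mu$ vanishes near $\Gamma$ so the equation is $\Delta u=\chi_{\{u>0\}}$ there, and that the blow-up limit $u_0$ is itself a global solution so the monotonicity identity applies to it --- are all available from the surrounding discussion in the paper.
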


In the proof of next theorem we will use the concept of \textit{regular points}.  A $x_0\in \Gamma$ is a regular point if  every blow up of $u$ at $x_0$ is a half
plane solution. Precisely, there is two category of blowup for a solution of the Problem (P). Let $u_0$ be a blowup with a fixed center
then it has one of the following forms (see \cite{psu}):

\begin{itemize}
 \item Polynomial solution: $ u_0(x) = \frac{1}{ 2} (x \cdot  Ax),  \, x \in   \R^{N}. $ Here $A$  is an $n \times n$
symmetric matrix with $Tr(A) = 1.$
\item Half plane solutions:  $u_0(x) = \frac{1}{ 2}  (x \cdot  e)^{2}_{ +}, \,  x \in  \R^{N} $   where $e$  is a unit vector.
\end{itemize}

\begin{theorem}\label{ff2} Let $x_0 $ be a free boundary  point and  $ x  \in  {\{u>0\}} $  then
\[
\limsup_{x\rightarrow x_0}=\liminf_{x\rightarrow x_0} \frac{ |\nabla u(x)|}{\sqrt{2u(x)}}=1 .
\]
\end{theorem}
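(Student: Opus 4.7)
The plan is a blow-up argument that exploits the scale-invariance of the quotient $|\nabla u(x)|/\sqrt{2u(x)}$. Given an arbitrary sequence $x_k \to x_0$ with $x_k \in \{u > 0\}$, I would rescale not around $x_0$ itself but around a nearest point of $\Gamma$ to $x_k$; this keeps the direction of approach pinned to the inward normal and avoids a degenerate $0/0$ in the limit.

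Let $\tilde x_k \in \Gamma$ be a nearest boundary point to $x_k$, set $\lambda_k := |x_k - \tilde x_k| = \dist(x_k,\Gamma) \to 0$, and form
\[
u_k(y) := \frac{u(\tilde x_k + \lambda_k y)}{\lambda_k^{2}}, \qquad y_k := \frac{x_k-\tilde x_k}{\lambda_k}, \quad |y_k|=1.
\]
A direct chain-rule calculation yields the scale-invariance
\[
\frac{|\nabla u(x_k)|}{\sqrt{2\,u(x_k)}} \,=\, \frac{|\nabla u_k(y_k)|}{\sqrt{2\, u_k(y_k)}},
\]
so the task reduces to passing to the limit on the right-hand side. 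Quadratic growth (Lemma \ref{Quadratic growth}) and non-degeneracy (Lemma \ref{non degeneracy}), together with the standard $C^{1,1}$ theory for obstacle-type problems, give uniform local $C^{1,1}$ bounds on $\{u_k\}$. Passing to a subsequence, $u_k \to u_0$ in $C^{1,\alpha}_{\loc}(\R^N)$, where $u_0$ is a global solution of Problem (P) with $\mu=0$; the argument of Theorem \ref{ff3} applied with the moving centers $\tilde x_k$ shows that $u_0$ is homogeneous of degree two.

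Assuming $x_0$ is a regular free boundary point, the classification of global homogeneous solutions recalled just before the statement forces $u_0(y) = \tfrac12(y\cdot e)_+^{2}$ for some unit vector $e$. Because $\tilde x_k$ is a nearest boundary point to $x_k$, the unit vector $y_k$ is the inward unit normal to $\Gamma$ at $\tilde x_k$; and since $\Gamma$ is $C^{1}$ in a neighborhood of the regular point $x_0$, these normals converge to the inward normal $e$ of the limiting half-plane, so $y_k \to e$. Evaluating at $y=e$ gives $u_0(e)=1/2$ and $|\nabla u_0(e)|=1$, so the ratio converges to $1$ along the chosen subsequence. As the argument applies to every subsequence of $x_k \to x_0$, both $\limsup$ and $\liminf$ equal $1$.

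The main obstacle is the half-plane identification of the blow-up. The classification also permits polynomial limits $\tfrac12(y\cdot Ay)$ with $\mathrm{Tr}(A)=1$, for which $|\nabla u_0|^{2}/(2u_0)$ is \emph{not} identically $1$ (e.g., $A=I/N$ gives the ratio $1/\sqrt{N}$). One must therefore either restrict the statement to regular free boundary points, or use structural properties of the quadrature problem (P) — such as the enclosure Theorem \ref{estimate} together with the shape of $\mu$ — to rule out singular blow-ups. A secondary concern, that $y_\infty$ could lie in $\{y \cdot e = 0\}$ and yield an indeterminate $0/0$, is avoided by construction, since $y_k$ is the inward unit normal at $\tilde x_k$ and hence converges to $e$ itself rather than to a tangential direction.
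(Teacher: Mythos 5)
Your overall strategy --- rescale, classify the blow-up as a half-plane solution, and read off the limit of the scale-invariant quotient from the limit profile --- is the same as the paper's. The difference is in the implementation, and your version is the more robust of the two. The paper blows up at the \emph{fixed} center $x_0$, writes $u(rx)=\tfrac12 (rx_1)_+^2 + cr^2$ and $\nabla u(rx) = (rx_1)e_1 + O(r^{\alpha})$, and then ``takes the quotient''; this silently breaks down when $x\to x_0$ nearly tangentially to the limiting half-plane, because then $x_1$ is small and the error terms are no longer negligible against $(rx_1)_+^2$ and $rx_1$ --- exactly the $0/0$ degeneracy you identify. Your device of recentering at the nearest free boundary point $\tilde x_k$, so that the rescaled evaluation point $y_k$ has unit length and converges to the normal $e$, removes that degeneracy cleanly. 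The price is that you need homogeneity of blow-ups with \emph{moving} centers $\tilde x_k \to x_0$, which is strictly more than Theorem \ref{ff3} as stated (it is usually obtained from the Weiss monotonicity formula); you gesture at this but should cite it explicitly. Also, your appeal to $C^1$ regularity of $\Gamma$ near $x_0$ to conclude $y_k\to e$ can be replaced by the more elementary observation that $B(y_k,1)\subset\{u_k>0\}$ while $0\in\partial\{u_k>0\}$; in the limit the unit ball $B(y_\infty,1)$ must lie in the half-space $\{y\cdot e>0\}$ and touch the origin, which forces $y_\infty=e$.

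The ``obstacle'' you flag is genuine, and the paper does not resolve it either: at a singular free boundary point with polynomial blow-up $\tfrac12\, y\cdot Ay$ the quotient tends to values other than $1$ in general (your computation with $A=I/N$ giving $1/\sqrt{N}$ is correct), so the theorem as literally stated --- for an arbitrary free boundary point --- fails without further hypotheses. The paper's proof simply asserts that the blow-up ``has the form'' of a half-plane solution, i.e., it implicitly restricts to regular points without saying so; note that singular points do occur for quadrature domains (the paper's own example of two tangentially touching balls produces one). Your explicit restriction to regular points is therefore the honest way to state and prove the result. Modulo a proper citation for variable-center homogeneity, your argument at regular points is complete and is tighter than the one in the paper.
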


\begin{proof}
By Theorem \ref{ff3}, blowup   solutions  at fixed point $x_0 \in \Gamma$ is  a global homogeneous solution of degree two.  Let $u$  be a homogeneous global solution.
Then by above discussion, $u$  has  the following form
\beas
u_0(x) = \frac{1}{ 2}  (x \cdot  e)^{2}_{ +}, \,  x \in  \R^{N} \,\,   \mbox{where}\,\, e  \,\,\mbox{is a unit vector.}
\eeas
Without loss of generality assume that $x_0=0$  then we  know that
\begin{center}$\frac{u(rx)}{r^2}  \rightarrow  \frac{(x_1)_+^2}{2}$  in $ C^{1,\alpha},$ \end{center} which means
\[
{\left| \frac{u(rx)}{r^2}  - \frac{(x_1)_+^2}{2}\right|} \rightarrow 0,
\]
and consequently,
\[
{\left|\frac{\nabla u(rx)}{r} -x_{1} e_{1}\right|}\rightarrow  0.
\]
From above one can get
\[
u(rx)=\frac{(rx_{1})_+^2}{2}+ c r^2 , \text{   where } c  \text{  is  an arbitrary small constant and}
\]
\[
\nabla u(rx)=(rx_1)e_1+O(r^\alpha),\quad  \alpha<1.
\]
Using above expression for  $u(rx)$  and  $|\nabla u(rx)|$  and taking the quotient implies the limit.
\end{proof}


  \subsection{A mixed boundary value problem and first algorithm.}
 Assume  $(\Omega, u)$ be a smooth solution of Problem (P). Our aim is to build a sequence $(\Omega_k, u_k)$ of solutions of an
approximate quadrature domain problem which converges towards $(\Omega, u).$ Assume that $p_k \in \pa \Omega_k$. Let $\mathbf{n}_k$ be the normal outward vector on $\pa \Omega_k$. By Taylor formula, one can write
 \[
 u(p_{k} +d_k \mathbf{n}_{k})\simeq  u(p_{k})  + d_k \nabla u(p_k)\cdot \mathbf{n}_{k}+ \frac{d_{k}^{2}}{2}\, \mathbf{n}_{k}^{T}\cdot D^{2}u(p_k)\cdot \mathbf{n}_k.
 \]
 We wish  to have  $u(p_{k} + \mathbf{n}_{k}d_k)=0$, so   if we put  $\nabla u(p_k)\cdot \mathbf{n}_{k}=-\sqrt{2u(p_k)}$ and use the approximation $D^{2}u\simeq \frac{1}{2} (\Delta u) I $, then one gets
 \be\label{dk}
 d_k=\zeta\sqrt{2u(p_k)},
  \ee
  where $\zeta= 2-\sqrt{2}$. It means that if $\Gamma_k=\pa \O_k$ then $\{\Gamma _k+d_k\cdot \mathbf{n}_k\}$ converges to $\Gamma$. We note that in dimension one,  $d_k=\sqrt{2u(p_k)}$.


To construct an algorithm, let $\O_0$ be an initial guess of $\O$ which contains $supp(\mu)$. Consider the following  boundary value problem which has a vital role
in the numerical scheme
\be\label{algo}
\begin{cases}
\Delta u= 1 -\mu, &  \mbox{ in } \O_0,\\
\frac{\partial u}{\partial n}=-\sqrt{2u}, & \mbox{  on  }   \partial \O_0.
\end{cases}
  \ee

\begin{remark}
 We note that  (\ref{algo}) is not stable at the points close to the free boundary,  therefore alternatively  we solve the following problem to have more  efficient and robust scheme,
\be\label{equation in algorithm}
\begin{cases}
\Delta u_{k} = 1 -\mu, &   \mbox{ in } \Omega_{k},\\
\frac{\partial u_{k}}{\partial n_k}=-\theta u_k, & \mbox{  on  }   \partial \Omega_{k}.
\end{cases}
  \ee
We desire that $\theta u_k$ behaves like $\sqrt{2u_{k}}$,  therefore one is able to choose
\be\label{theta}
\theta=\bigg ({\frac{2}{\displaystyle\sup_{\pa \O_{k-1}} u_{k-1}}}\bigg)^{1/2}.
\ee
To determine an optimal value for $\theta$ we employe the fixed point process and iterate $\theta$ as follows.

First of all, let $\theta$ be as in (\ref{theta}) and solve (\ref{equation in algorithm}) to obtain the value of $u$ on $\pa \O_k$. Then compute the corresponding $\theta$ by the formula (\ref{theta}) and repeat the same process again. This scenario will  converge  to the best choice of $\theta$.
\end{remark}
The existence of (\ref{equation in algorithm}) is
based on minimization techniques and is a special case of the next lemma.

\begin{lemma}\cite {ev}\label{exercise in Evans}
 Assume $\beta\, :\,\R \rightarrow \R $ is smooth, with
$$0<a\leq \beta '(z)\leq b\quad (z\in \R),$$
for constants $a,b$. Let $f\in L ^2(U)$, $U\subset \R^N$ is a bounded, open set with smooth boundary. For
\beas
\begin{cases}
-\Delta u= f, &  \mbox{in}\,\, U,\\
\frac{\pa u}{\pa n}+\beta(u)=0, & \mbox{on}\,\, \pa U,
\end{cases}
\eeas
there exists a unique weak solution.
\end{lemma}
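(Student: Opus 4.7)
The plan is to obtain the solution by the direct method of the calculus of variations, using the fact that the hypothesis on $\beta$ forces the associated primitive to be strictly convex and quadratically coercive.

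First I would set up the natural energy functional. Define $B(z):=\int_0^z \beta(s)\,ds$, so that $B''=\beta'\in[a,b]$ and hence $B$ is strictly convex with the two-sided bound
\[
\tfrac{a}{2}z^2 + \beta(0)z \;\le\; B(z) \;\le\; \tfrac{b}{2}z^2 + \beta(0)z.
\]
Consider the functional
\[
I[u] \;=\; \tfrac{1}{2}\int_U |\nabla u|^2\,dx \;+\; \int_{\pa U} B(u)\,dS \;-\; \int_U f u\,dx
\]
on $H^1(U)$. Since the trace operator $H^1(U)\to L^2(\pa U)$ is continuous, $I$ is well-defined, and a direct computation of the first variation shows that critical points of $I$ are precisely weak solutions of the mixed problem.

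Next I would check the two ingredients needed for the direct method. For coercivity, the quadratic lower bound on $B$ gives
\[
I[u] \;\ge\; \tfrac{1}{2}\|\nabla u\|_{L^2(U)}^2 \;+\; \tfrac{a}{2}\|u\|_{L^2(\pa U)}^2 \;-\; C\bigl(\|u\|_{L^2(\pa U)} + \|u\|_{L^2(U)}\bigr);
\]
because $\|\nabla u\|_{L^2(U)}^2 + \|u\|_{L^2(\pa U)}^2$ defines a norm equivalent to the $H^1$-norm on $U$ (a standard Poincaré-type estimate using the trace), this yields $I[u]\to+\infty$ as $\|u\|_{H^1(U)}\to\infty$. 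For weak lower semicontinuity, the Dirichlet term is convex and continuous in $H^1(U)$, the linear term is continuous, and the boundary term is weakly lower semicontinuous because $B$ is convex and the trace map is compact into $L^2(\pa U)$ (so weak convergence $u_n\rightharpoonup u$ in $H^1(U)$ forces strong, hence a.e.\ subsequential, convergence of traces, and Fatou applies to $B\ge \tfrac a2 z^2+\beta(0)z$ minus a fixed $L^1$ term). The direct method then produces a minimizer $u\in H^1(U)$, and computing $\tfrac{d}{dt}I[u+tv]\big|_{t=0}=0$ for all $v\in H^1(U)$ gives the weak formulation
\[
\int_U \nabla u\cdot \nabla v\,dx \;+\; \int_{\pa U}\beta(u)\,v\,dS \;=\; \int_U f v\,dx.
\]

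The uniqueness is where the lower bound $\beta'\ge a>0$ is essential, and this is the only subtle point: strict monotonicity replaces the loss of a homogeneous boundary condition. If $u_1,u_2$ are two weak solutions, subtracting the two weak formulations and testing with $v=u_1-u_2$ gives
\[
\int_U |\nabla(u_1-u_2)|^2\,dx \;+\; \int_{\pa U}\bigl(\beta(u_1)-\beta(u_2)\bigr)(u_1-u_2)\,dS \;=\; 0,
\]
and the monotonicity bound $(\beta(s)-\beta(t))(s-t)\ge a(s-t)^2$ forces both $\nabla(u_1-u_2)\equiv 0$ in $U$ and $u_1=u_2$ on $\pa U$, hence $u_1\equiv u_2$. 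The main (mild) obstacle is verifying the equivalence of norms used in the coercivity step; everything else is routine once the functional $I$ is in place.
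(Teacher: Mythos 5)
Your proof is correct and follows exactly the route the paper itself indicates: the paper gives no proof of this lemma, merely citing Evans and noting that existence is ``based on minimization techniques,'' which is precisely your direct-method argument with the convex boundary potential $B(z)=\int_0^z\beta(s)\,ds$ and the monotonicity of $\beta$ for uniqueness. No discrepancies to report.
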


\subsection{Level set formulation.}
The level set method was introduced by Osher and Sethian for implicitly tracking dynamic surfaces and curves, see \cite{of,se}. The main idea behind this method is to embed an interface $\Gamma$, which lies in $\R^{N-1}$ into a surface in dimension $\R^N$. We can do this embedding by defining  a proper function $\phi$ such that $\Gamma $ is the zero level set of $\phi$, i.e,
$$\Gamma=\pa \O=\{x\in \R^N;\,\phi(x)=0\}.$$
Suppose that $\Gamma$ divides $\R^N$ into multiple connected components then one can recognize the inside of one component from its exterior when the sign of $\phi$ changes.

Regarding to Theorem \ref{estimate}, let $\mathcal{T}$ be a given rectangle such that $\O\subset B_{r(\mu)+R}\subset \mathcal{T}$ for appropriate $R>0$. To apply the level set method for Problem (P), we need $\phi$ be  positive in $\mathcal{T}\setminus \O$ and negative in $\O$. By this way the outward normal vector of $\O$ is given by
$$\bf n=\frac{\nabla \phi}{|\nabla \phi|}.$$

  We note that  Problem (P) is stationary and the level set formulation requires a time evolution so we define the parameter $t$ and
  introduce  a family of boundaries $\O(t)$ for $t>0$ as the level sets by
  $$\pa \O(t)=\{x\in \R^N;\,\phi(x,t)=0\},$$
  for unknown function $\phi\,:\, \mathcal{T}\times \R^+\rightarrow \R$.  By chain rule
  \[\phi_t+\nabla \phi(x(t),t)\cdot x'(t)=0.\]
  Let $F=x'(t)\cdot \bf n$ which means that $F$ is  speed   in outward  normal direction. Then the level set equation will be as follows
  \[
  \begin{cases}
  \phi_t+F|\nabla \phi|=0,\\
  \phi(x,t=0)\,\,\text{is given}.
  \end{cases}
  \]

In this paper we restrict our attention to the case that $\phi$ is considered as the sign distance function and therefore $|\nabla \phi|=1$. Hence the level set equation  turns to
 \be\label{dis}
 \frac{\pa \phi}{\pa t}+ F = 0 \quad \text{in} \quad   \mathcal{T}\times \R^+.
 \ee

Now  consider the following boundary value problem
 \be
 \begin{cases}
  \Delta u(t) =1-\mu, & \mbox{in}  \quad \Omega(t), \\
\frac{\partial  u(t)}{\partial n} =-\theta u(t),&  \mbox{on}  \quad \partial \Omega(t).
 \end{cases}
\ee
According to  (\ref{dk}), we choose the quantity $\zeta\sqrt{2u(t)}$ as the speed which decreases in $\O(t)\setminus supp(\mu) $ and  goes to zero when  $\O(t)$  approaches to the free boundary.  Regarding to (\ref{dis}), the displacement of the boundary $\O(t)$ can be obtained  by considering the following equation :
\be\label{extention1}
\frac{\pa \phi}{\pa t}+\zeta\sqrt{2u(t)}=0,\quad\mbox{on}\,\,\pa \O(t).
\ee
 Now let $\mathcal{T}$ be the rectangle  in section 4.2. The extension of the previous equation to whole domain $\mathcal{T}$,  is one of the important issue in the level set approach. To do this we solve the problem:
\be\label{ext}
\begin{cases}
   \Delta v(t) = 1, & \mbox{in}\quad  \O(t)\setminus supp(\mu), \\
   \Delta v(t) = 0, & \mbox{in}\quad  \mathcal{T}\setminus \O(t),\\
 v(t)  = 0, &\mbox{on} \quad \pa (supp(\mu))\cup \pa \mathcal{T}, \\
 v(t) = \zeta\sqrt{2u(t)},&  \mbox{on}  \quad \partial \Omega(t).
\end{cases}
\ee
We now extend  equation (\ref{extention1}) to $\mathcal{T}$ by
\be\label{extention2}
\frac{\pa\phi}{\pa t}+v(t)=0,\quad\mbox{in}\,\,\mathcal{T}\setminus supp(\mu).
\ee
For more information on velocity extension see \cite{fr,of}.


\subsubsection{First algorithm for Problem (P).}

 Choose a  tolerance, TOL$<<1$.

\begin{enumerate}
\item Set $ k=0 $, choose an initial domain $ \Omega_0  $  with   $ \Gamma_{0}=\partial \Omega_0$ such that
\[
supp(\mu) \subset \Omega_{0} \subset B_{r(\mu)+R}.
\]

\item Compute $u_k$ on $\Omega_k$ which is the solution of the following elliptic boundary value problem
\beas
(\star)\qquad
\begin{cases}
\Delta u_{k} = 1 -\mu, &  \mbox{in } \Omega_{k},\\
\frac{\partial u_{k}}{\partial n_k}=-\theta u_k, & \mbox{on }   \partial \Omega_{k}.
\end{cases}
  \eeas

\item Solve (\ref{ext}) and obtain $v$.
\item Update the level set function $\phi$ from (\ref{extention2}) to get $\O_{k+1}$.

\item Solve $(\star)$ in $\O_{k+1}$ and get $u_{k+1}$.

\item If $\displaystyle\sup_{\pa \O_{k+1}}|u_{k+1}|< \mbox{TOL}$, then stop else set $k=k+1$ and go to (2).

\end{enumerate}

\section{Second  numerical method to approach to the solution of Problem (P) based on shape optimization.}
\renewcommand{\theequation}{5.\arabic{equation}}
\setcounter{equation}{0}
 The shape sensitivity analysis is used to define a velocity field, which allows us to update the surface while decreasing
a given cost function.
The solution of an elliptic boundary value problem usually depends highly
nonlinearly on the geometry of the given domain. Thus the geometry can not be solved
straightforward from a linear equation.

In shape optimization approach,  we rewrite the free boundary problem
 such that the minimum of some cost functional  is attained at the solution of free boundary.
The solution of Problem (P)   minimizes the  functional
\be\label{boundary2}
E(u,\Omega)=\int_{\Omega}\frac{1}{2}|\nabla u|^2 dx + \int_{\Omega}(1-\mu)\, u\,dx,
\ee
over $u\in H^{1}(\Omega)$ where $\Omega={\{u>0}\}.$  Note that we get $u=0$ on $\partial \Omega.$


  In the following we discuss the shape sensitivity analysis for the above shape functional  related to  Problem (P). At first,   we briefly recall some basic   facts related to shape calculus \cite{sz}.

 In shape sensitivity  we  analyze how the solution of a
  PDE changes when the domain is changing with a velocity field. Let $ x \in \R^N  $, and  $ \mathbf{V }(t, x) $ be a  velocity field (vector field)  defined in
   $D,  \mathbf{V} \in C^{k}(D;\R^N), \mathbf{V}|_{\partial D}=0 $.
Let $ t $  be  artificial time.
Assume that  $ \Sigma \subseteq D$. It is natural to define transformation $T_{t}(\mathbf{V} )x =X(t, x)$ with a velocity field $\mathbf{V}$ by  differential equations
 \[ \frac{\partial X}{\partial t}(t,x) =\mathbf{V}(t,x), \ \ X(0, x) = x, ~~~~  x \in \Sigma.
 \]
One can see that this   transformation is quite close to a perturbation of the identity in \cite{sz,de}, where
the transformation was defined  by
\[
T_{t}(\mathbf{V} ) = I + t\mathbf{V} (x).
\]
For small perturbations these two transformations are close (see \cite{ti2}). The image of $\Sigma\subset \O$ under $T_t$ is $\Sigma_t$.

 Let $J$ be a domain functional $ J:\Sigma\longmapsto  \mathbb{R}$ .
We say that the functional has a directional shape derivative to direction $\mathbf{V}$ at $\Sigma $   if the limit
\[
 \underset{t \rightarrow 0}    {\text {lim}}  \frac{J(\Sigma _{t})-J(\Sigma)}{t}:=d J(\Sigma,\mathbf{V}), \\
\]
exists. If further $dJ (\Sigma, \mathbf{V} )$ is linear and continuous with respect to $\mathbf{V}$ and it exists for all directions $\mathbf{V}$, we say that
$J$ is shape differentiable at $\Sigma$.
 By Hadamard's  structure theorem,  $ dJ(\Sigma,\mathbf{V}) $ depends  only on the normal
component  of $\mathbf{V} $ on the boundary of $\Sigma $, see \cite{ti,zo}.

We use the notations $u_{\Omega}$ or $u(\Omega) $ to show the dependence of solution of a given PDE with respect to  the domain $\O$. For a function
$v(\Sigma)$ and $\Sigma\in C^k,\,k\geq 1$,   we define material derivative  as a limit
 \[
\dot{v}(\Sigma;\mathbf{V})(x) := \underset{t \rightarrow 0}  { \text {lim}}  \frac {v(\Sigma_t)\circ T_t(\mathbf{V})-v(\Sigma)}{t}.
\]
This limit may exist  either in a weak or a strong sense, and the material derivative is
called  a weak or strong material derivative respectively, see \cite{sz}.

The  shape derivative of $v(\Sigma)$ in the direction $\mathbf{V}$ is the element $v'(\Sigma; \mathbf{V})$  defined by
\[
v'(\Sigma;\mathbf{V}) :=\dot{v}(\Sigma;\mathbf{V})-\nabla v(\Sigma)\cdot \mathbf{V}(0),
\]
whenever it exists either in a weak or a strong sense. For simplicity's sake we shall utilize  $v'_\S$ instead of $v'(\Sigma;\mathbf{V}).$

 Shape derivative  represents the change of function $v$ with respect to the geometry. Equivalently, shape derivative is the variation of the state variable with respect to the shape change.

The following lemmas represent the basic formulas for shape differentiation of integrals. In the following we assume that $\O$ is bounded.

\begin{lemma}\cite{sz} \label{for derivative}
 Let $ f (\O_t)\in L^{1}(\Omega_t) $ be shape differentiable and $f'(\O_t)\in L^1(\O_t)$, $t\in[0,T]$ and $T>0.$ If $\O_t$ is a $C^{0,1}$-domain, then
\begin{equation}\label{derivat}
\bigg(\frac{d}{dt}\int_{\Omega_{t}} f(\O_t) dx \bigg)\bigg|_{t=0}=\int_{\Omega}f'(\O)dx + \int_{\partial\Omega} f(\O)<\mathbf{V},\mathbf{n}> ds.
\end{equation}
\end{lemma}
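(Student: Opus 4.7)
The plan is to reduce the moving-domain integral to one over the fixed reference domain $\Omega$ by pulling back through the diffeomorphism $T_t(\mathbf{V})$, then to differentiate under the integral sign, and finally to re-interpret the volume contribution as a boundary flux via the divergence theorem.

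The first step is the change of variables $x = T_t(\mathbf{V})(y)$. Since $T_t$ is a $C^k$ diffeomorphism (for $k \geq 1$) carrying $\Omega$ onto $\Omega_t$, we obtain
\[
\int_{\Omega_t} f(\Omega_t)(x)\,dx \;=\; \int_{\Omega} f(\Omega_t)\bigl(T_t(y)\bigr)\,J_t(y)\,dy,
\]
where $J_t(y) = |\det DT_t(y)|$. Because $T_0 = I$ we have $J_0 \equiv 1$, and Jacobi's formula combined with $\partial_t T_t|_{t=0} = \mathbf{V}(0)$ yields $\partial_t J_t|_{t=0} = \operatorname{div} \mathbf{V}(0)$. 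The integrand is now on a $t$-independent domain, so under the $L^1$ hypotheses on $f(\Omega_t)$ and $f'(\Omega_t)$ the derivative can be brought inside, giving
\[
\frac{d}{dt}\bigg|_{t=0}\!\!\int_{\Omega_t} f(\Omega_t)\,dx \;=\; \int_{\Omega} \dot{f}(\Omega;\mathbf{V})(y)\,dy \;+\; \int_{\Omega} f(\Omega)(y)\,\operatorname{div} \mathbf{V}(0)\,dy,
\]
where $\dot{f}$ is the material derivative defined above.

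Next, invoking the definition $f'(\Omega;\mathbf{V}) = \dot{f}(\Omega;\mathbf{V}) - \nabla f(\Omega)\cdot \mathbf{V}(0)$, I rewrite the right-hand side as
\[
\int_{\Omega} f'(\Omega)\,dy \;+\; \int_{\Omega} \bigl[\nabla f(\Omega)\cdot \mathbf{V}(0) + f(\Omega)\operatorname{div} \mathbf{V}(0)\bigr]\,dy \;=\; \int_{\Omega} f'(\Omega)\,dy \;+\; \int_{\Omega} \operatorname{div}\bigl(f(\Omega)\,\mathbf{V}(0)\bigr)\,dy.
\]
The $C^{0,1}$ regularity of $\partial\Omega$ validates the divergence theorem applied to the last term, converting it into $\int_{\partial\Omega} f(\Omega)\,\langle \mathbf{V}(0),\mathbf{n}\rangle\,ds$, which is precisely the claimed formula (the argument of $\mathbf{V}$ at $t=0$ being understood).

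The main technical obstacle lies in step two: rigorously justifying the interchange of differentiation and integration. One must know that the map $t \mapsto f(\Omega_t)\circ T_t$ is differentiable into $L^1(\Omega)$ (this is exactly what the hypothesis $\dot{f}\in L^1$ together with $f' \in L^1$ encodes, since $\dot{f} = f' + \nabla f\cdot \mathbf{V}(0)$) and that $t \mapsto J_t$ is $C^1$ uniformly on $\Omega$. Granted these, a dominated-convergence argument legitimizes the derivative passage. The rest of the proof is bookkeeping using the chain rule, Jacobi's identity for $\partial_t \det DT_t$, and the divergence theorem.
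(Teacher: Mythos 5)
The paper does not prove this lemma at all: it is quoted verbatim from Sokolowski--Zolesio \cite{sz} as a known formula for the derivative of a domain integral. Your argument is precisely the standard proof given in that reference (and in Delfour--Zolesio): pull back to the fixed domain via $T_t(\mathbf{V})$, use Jacobi's formula to get $\partial_t J_t|_{t=0}=\operatorname{div}\mathbf{V}(0)$, differentiate under the integral to produce the material derivative, substitute $\dot f=f'+\nabla f\cdot\mathbf{V}(0)$, recombine into $\operatorname{div}(f\,\mathbf{V}(0))$, and apply the divergence theorem on the Lipschitz domain. All of these steps are correct. The only point worth flagging is that under the bare hypotheses $f(\Omega_t),\,f'(\Omega_t)\in L^1$ the intermediate objects $\nabla f(\Omega)\cdot\mathbf{V}(0)$ and the trace of $f(\Omega)$ on $\partial\Omega$ are not individually defined; the rigorous statement requires slightly more regularity (e.g.\ $f(\Omega)\in W^{1,1}$ near $\partial\Omega$, as is assumed in \cite{sz}), or else one keeps the volume term in the material-derivative form and obtains the boundary term by density. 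You acknowledge this in your closing paragraph, so the proposal is a faithful reconstruction of the cited proof rather than a new route.
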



\subsection{Shape optimization techniques for Problem (P) and second algorithm.}

First ingredient  is the shape derivative of the function $u_\Omega$.

\begin{lemma}\label{u prim}
The shape derivative of $u_\Omega$ in the normal direction $\mathbf{V}$, is given by the function $u'_\Omega$, satisfies
\be\begin{cases}\label{derivative}
\Delta u'_\Omega =  0, & \text{in}\quad\Omega, \\
u'_\O = -\frac{\pa u }{\pa \mathbf{n}} <\mathbf{V}(0),\mathbf{n}>,& \mbox{on}\quad \pa \O.
\end{cases}\ee
\end{lemma}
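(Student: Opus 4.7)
The plan is to shape-differentiate the boundary value problem that defines $u_\Omega$, namely
\[
\Delta u_{\Omega_t} = 1 - \mu \ \text{in } \Omega_t, \qquad u_{\Omega_t} = 0 \ \text{on } \partial\Omega_t,
\]
which for $t$ small and $\Omega_t = T_t(\mathbf{V})(\Omega)$ is solvable since $\mathrm{supp}(\mu)\subset \Omega_t$. I would first establish (by standard arguments for Poisson's equation under smooth domain perturbations, see e.g.\ \cite{sz}) that $t\mapsto u_{\Omega_t}\circ T_t$ is differentiable at $t=0$ in a suitable Sobolev norm on $\Omega$, so that the material derivative $\dot u_\Omega$ and shape derivative $u'_\Omega = \dot u_\Omega - \nabla u_\Omega\cdot \mathbf{V}(0)$ exist.

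Next I would treat the interior equation. For any test function $\varphi\in C_c^\infty(\Omega)$ one has $\mathrm{supp}(\varphi)\Subset \Omega_t$ for all $t$ small, so
\[
\int_{\Omega_t} u_{\Omega_t}\,\Delta\varphi\,dx = \int_{\Omega_t} (1-\mu)\varphi\,dx,
\]
and the right-hand side is independent of $t$. Applying Lemma~\ref{for derivative} (the boundary term vanishes since $\varphi$ has compact support in $\Omega$) and differentiating at $t=0$ gives
\[
\int_\Omega u'_\Omega\,\Delta\varphi\,dx = 0 \qquad \text{for all } \varphi\in C_c^\infty(\Omega),
\]
so $\Delta u'_\Omega = 0$ in $\Omega$ in the distributional sense, hence classically by elliptic regularity.

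For the boundary condition, I would use the fact that $u_{\Omega_t} \circ T_t = 0$ on $\partial\Omega$ for every small $t$, since $T_t$ maps $\partial\Omega$ onto $\partial\Omega_t$. Differentiating in $t$ at $t=0$ yields $\dot u_\Omega = 0$ on $\partial\Omega$. Combining with the definition $u'_\Omega = \dot u_\Omega - \nabla u_\Omega\cdot \mathbf{V}(0)$ gives $u'_\Omega = -\nabla u_\Omega\cdot \mathbf{V}(0)$ on $\partial\Omega$, and since $u_\Omega\equiv 0$ on $\partial\Omega$ the tangential component of $\nabla u_\Omega$ vanishes there, so $\nabla u_\Omega = \frac{\partial u_\Omega}{\partial \mathbf{n}}\,\mathbf{n}$, yielding exactly the stated boundary value $u'_\Omega = -\frac{\partial u}{\partial \mathbf{n}}\langle \mathbf{V}(0),\mathbf{n}\rangle$.

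The main technical obstacle is the first step: rigorously justifying the existence of the material derivative in a norm strong enough to take boundary traces and to commute the derivative with the Laplacian. This boils down to continuous differentiability of the mapping $t\mapsto u_{\Omega_t}\circ T_t\in H^1(\Omega)$, which requires $\partial\Omega$ to be at least $C^{1,1}$ (so that $u_\Omega\in H^2(\Omega)$ and $\partial u/\partial\mathbf{n}$ has a meaningful trace) and $\mathbf{V}\in C^k(D;\R^N)$ with $k\geq 1$; the rest of the argument is then a routine application of the implicit function theorem or an explicit a priori estimate on the difference quotient of $u^t = u_{\Omega_t}\circ T_t$.
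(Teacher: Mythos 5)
Your argument is correct and, for the interior equation, follows the same strategy as the paper: differentiate the weak formulation of $\Delta u_{\Omega_t}=1-\mu$ in $t$ using Lemma \ref{for derivative} and observe that the data term contributes nothing. Two points of execution differ, both in your favor. First, you test against $\varphi\in C_c^\infty(\Omega)$ and work with $\int_{\Omega_t}u_{\Omega_t}\Delta\varphi\,dx$, so the test function space does not move with $t$ and the boundary term in \eqref{derivat} vanishes for free; the paper instead differentiates $\int_{\Omega_t}\nabla u_{\Omega_t}\cdot\nabla\varphi\,dx$ with $\varphi\in H^1_0(\Omega_t)$, quietly ignoring the $t$-dependence of that space (and its displayed right-hand side $-\int_{\partial\Omega}f'\,dx=0$ is garbled). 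Second, for the boundary condition the paper simply cites equation (3.6) of chapter 3 in \cite{sz}, whereas you actually derive it: differentiate $u_{\Omega_t}\circ T_t=0$ on $\partial\Omega$ to get $\dot u_\Omega=0$ there, subtract $\nabla u_\Omega\cdot\mathbf{V}(0)$, and use that the tangential gradient of $u_\Omega$ vanishes on the level set $\{u_\Omega=0\}$ to reduce $\nabla u_\Omega\cdot\mathbf{V}(0)$ to $\frac{\partial u}{\partial\mathbf{n}}<\mathbf{V}(0),\mathbf{n}>$. The remaining gap you flag yourself --- rigorous existence of the material derivative in a norm admitting traces --- is also left implicit in the paper, so your sketch is at least as complete as the original.
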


\begin{proof}
The minimizer of the functional in (\ref{boundary2})  satisfies the following equation
\[
\Delta u_{\O_t}=f =1-\mu  \quad   \text {in } \O_t. \\
\]
By multiplying a test function, $ \varphi \in H^{1}_{0}(\Omega_t),$ and taking integral one obtains
\begin{equation}\label{shape2}
\int_{\O_t} \nabla u_{\O_t}  \cdot \nabla \varphi  \ dx= -\int_{\O_t} f \, \varphi \, dx.
\end{equation}\\
Taking the derivative of the above equation respect to $t$ and considering  Lemma \ref{for derivative} one can see that $ u'_{\O} $ satisfies
\[
\int_{\O} \nabla u'_\O \cdot \nabla \varphi  \, dx= -\int_{\pa \O} f'\,dx=0.
\]
That is
\[
\Delta u'_\O=0.
\]
The boundary condition in   (\ref{derivative}) is verified by equation (3.6), chapter 3 in \cite{sz}.
\end{proof}

\begin{remark}
Let $\Gamma=\partial  \Omega $ be the free boundary for the solution of Problem (P). Then
\beas
u'_{ \Omega} = 0 \qquad \text{in } \O.
\eeas
\end{remark}

Let us now to analyze the behavior of the energy near the solution.
\begin{lemma}
Consider the energy functional (\ref{boundary2}) of  Problem (P). Then the shape derivative of $E$ with respect to $\mathbf{V}$ is
\be
d E(\S,\mathbf{V})\,=\,\int_{ \S} div(-\frac{1}{2}|\nabla u|^2\,\mathbf{V})\,dx.
\ee
\end{lemma}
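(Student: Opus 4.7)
The plan is to apply Lemma \ref{for derivative} directly to both pieces of $E$ and then massage the result using the PDE satisfied by $u_\Omega$ together with the boundary data on $\partial\Omega$.

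First I would differentiate each integral. By Lemma \ref{for derivative},
\[
\frac{d}{dt}\int_{\Omega_t}\tfrac{1}{2}|\nabla u_{\Omega_t}|^2\,dx\bigg|_{t=0}
= \int_{\Omega}\nabla u\cdot\nabla u'_\Omega\,dx + \tfrac{1}{2}\int_{\partial\Omega}|\nabla u|^2\langle\mathbf{V},\mathbf{n}\rangle\,ds,
\]
and similarly
\[
\frac{d}{dt}\int_{\Omega_t}(1-\mu)u_{\Omega_t}\,dx\bigg|_{t=0}
= \int_{\Omega}(1-\mu)u'_\Omega\,dx + \int_{\partial\Omega}(1-\mu)u\,\langle\mathbf{V},\mathbf{n}\rangle\,ds.
\]
Since $u=0$ on $\partial\Omega$, the very last boundary term drops out.

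Next I would combine the two interior terms. Integration by parts and the identity $\Delta u=1-\mu$ give
\[
\int_\Omega\nabla u\cdot\nabla u'_\Omega\,dx + \int_\Omega(1-\mu)u'_\Omega\,dx
= \int_\Omega\operatorname{div}(u'_\Omega\,\nabla u)\,dx
= \int_{\partial\Omega}u'_\Omega\,\tfrac{\partial u}{\partial\mathbf{n}}\,ds.
\]
Inserting the boundary condition from Lemma \ref{u prim}, namely $u'_\Omega=-\tfrac{\partial u}{\partial\mathbf{n}}\langle\mathbf{V},\mathbf{n}\rangle$, turns this into $-\int_{\partial\Omega}(\partial u/\partial\mathbf{n})^2\langle\mathbf{V},\mathbf{n}\rangle\,ds$. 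Because $u$ vanishes along $\partial\Omega$, the tangential gradient of $u$ on $\partial\Omega$ is zero and hence $|\nabla u|^2=(\partial u/\partial\mathbf{n})^2$ on $\partial\Omega$. Summing all contributions:
\[
dE(\Sigma,\mathbf{V}) = -\int_{\partial\Omega}|\nabla u|^2\langle\mathbf{V},\mathbf{n}\rangle\,ds + \tfrac{1}{2}\int_{\partial\Omega}|\nabla u|^2\langle\mathbf{V},\mathbf{n}\rangle\,ds = -\tfrac{1}{2}\int_{\partial\Omega}|\nabla u|^2\langle\mathbf{V},\mathbf{n}\rangle\,ds.
\]
Finally, applying the divergence theorem in reverse rewrites this boundary integral as the volume form stated in the lemma.

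The main obstacle is really a bookkeeping one rather than a deep analytic one: I have to make sure the regularity is good enough to justify differentiating under the integral sign and integrating by parts (here $u\in W^{2,p}_{\mathrm{loc}}$ for all $p<\infty$ from the discussion after \eqref{1-phase-1} suffices), and I have to use the two boundary identities $u=0$ and $|\nabla u|=|\partial u/\partial\mathbf{n}|$ on $\partial\Omega$ at exactly the right moment so that the $u'_\Omega$ term, which is not known explicitly, can be removed via Lemma \ref{u prim}. Once these pieces are aligned, the computation is essentially algebraic.
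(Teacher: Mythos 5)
Your proposal is correct and follows essentially the same route as the paper: apply Lemma \ref{for derivative} to both terms, drop the boundary term containing $u$ since $u=0$ on $\partial\Omega$, integrate by parts and use $\Delta u = 1-\mu$ to cancel the interior $u'_\Omega$ terms, substitute the boundary condition from Lemma \ref{u prim}, and convert back to a volume integral by the divergence theorem. Your explicit remark that $|\nabla u|^2 = (\partial u/\partial\mathbf{n})^2$ on $\partial\Omega$ because the tangential gradient vanishes is a detail the paper leaves implicit, but the argument is the same.
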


\begin{proof}
By Lemma \ref{for derivative} one can see
\beas
d E(\S,\mathbf{V})=\int_{\S}\big(\nabla u  \cdot \nabla  u' +(1-\mu)u'\big)\,dx +\int_{\pa \S} \big( \frac{1}{2} |\nabla u|^2+(1-\mu)u\big)\mathbf{V}\cdot \mathbf{n}\, ds, \eeas
where $u'$ is the shape derivative of $u$ into direction $\mathbf{V}$. Our assumption on Problem (P) states that $u\lfloor_{\pa \S}=0$. Then the shape derivative of $E$ is
\be\label{derivative of E}
d E(\S,\mathbf{V})=\int_{\S}\nabla u  \cdot \nabla  u' \ dx+\int_{\S}(1-\mu)u'+ \int_{\pa \S}  \frac{1}{2} |\nabla u|^2\,\mathbf{V}\cdot\mathbf{n}\, ds.
 \ee
According to Green's theorem, the first term of (\ref{derivative of E}) is
\beas\label{derivative of E2}
\int_{\S}\nabla u  \cdot \nabla  u' \,dx=-\int_{\S}u'\,\Delta u\,dx+ \int_{\pa \S} u'\frac{\pa u}{\pa \mathbf{n}}\,ds,
 \eeas
and we get
\begin{align}
 \notag dE(\S,\mathbf{V})=&-\int_{\S} u'\Delta u\,dx+\int_{\pa \S}u'\frac{\pa u}{\pa \mathbf{n}}ds\\
\notag &+\int_{\S} (1-\mu) u'\,dx+ \int_{\pa \S}\frac{1}{2} |\nabla u|^2\,\mathbf{V}\cdot \mathbf{n}\, ds\\
\notag &=-\int_{\S} u'(1-\mu)\,dx+\int_{\pa \S}u'\frac{\pa u}{\pa \mathbf{n}}ds\\
\notag &+\int_{\S}(1-\mu) u'\,dx+ \int_{\pa \S}\frac{1}{2} |\nabla u|^2\,\mathbf{V}\cdot \mathbf{n}\, ds\\
\notag &= \int_{\pa \S}u'\frac{\pa u}{\pa \mathbf{n}}ds+ \int_{\pa \S}\frac{1}{2} |\nabla u|^2\,\mathbf{V}\cdot \mathbf{n}\, ds.
\end{align}

As $u$ is the solution of a Dirichlet problem, Lemma \ref{u prim} gives us $u'=-\frac{\pa u}{\pa \mathbf{n}}<\mathbf{V},\mathbf{n}>$ on $\pa \S$. Hence we have for $dE(\S,\mathbf{V})$  the expression
\beas
d E(\S,\mathbf{V})=- \int_{\pa \S}\frac{1}{2} |\nabla u|^2\,\mathbf{V}\cdot \mathbf{n}\, ds,
\eeas
and by Stock's theorem it turns
\beas
d E(\S,\mathbf{V})\,=\,\int_{ \S} div(-\frac{1}{2}|\nabla u|^2\,\mathbf{V})\,dx.
\eeas
\end{proof}

\begin{corollary}

The solution of Problem (P)  is a critical point of the energy functional  $E$.

\end{corollary}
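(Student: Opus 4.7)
The plan is to invoke the previous lemma and exploit the two free-boundary conditions built into Problem (P). By the lemma just proved, the shape derivative of $E$ in the direction $\mathbf{V}$ can be written as the boundary integral
\[
dE(\Sigma,\mathbf{V}) \;=\; -\int_{\partial \Sigma}\tfrac{1}{2}|\nabla u|^2\,\mathbf{V}\cdot \mathbf{n}\,ds,
\]
so to conclude that a solution $(\Omega,u)$ of Problem (P) is a critical point of $E$, it suffices to show that $|\nabla u|$ vanishes identically on $\partial\Omega$; then $dE(\Omega,\mathbf{V})=0$ for every admissible velocity field $\mathbf{V}$, which is exactly the definition of a critical point.

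The key observation is that Problem (P) encodes two overdetermined conditions at the free boundary: $u=0$ and $u\ge 0$ in $\R^N$, together with $u=0$ outside $\Omega$. Combined with the regularity recorded earlier in the paper ($\nabla u$ is H\"older continuous across $\partial\Omega$), the fact that $u\equiv 0$ on the complement forces $\nabla u = 0$ on $\partial\Omega$ as well. This is exactly the identity noted in Remark \ref{rem1}, where it is used to derive the volume formula $m(\Omega)=\mu(\mathrm{supp}(\mu))$. So I would first cite that remark (or the characterization (\ref{q1})) to justify $|\nabla u|\equiv 0$ on $\partial\Omega$, and then plug this into the boundary integral.

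Substituting $|\nabla u|^2 = 0$ on $\partial\Omega$ into the formula for $dE$ yields
\[
dE(\Omega,\mathbf{V}) \;=\; -\int_{\partial \Omega}\tfrac{1}{2}\cdot 0 \cdot \mathbf{V}\cdot \mathbf{n}\,ds \;=\; 0
\]
for every velocity field $\mathbf{V}$, which is the desired conclusion. Equivalently, using the divergence form of the shape derivative from the previous lemma, the integrand $-\tfrac{1}{2}|\nabla u|^2\mathbf{V}$ has vanishing boundary flux, so its integral of the divergence is zero.

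The main obstacle, if any, is really a bookkeeping point rather than a mathematical difficulty: one has to make sure the vanishing of $|\nabla u|$ on $\partial\Omega$ is invoked with enough regularity to interpret the boundary integral pointwise. This is not an issue here because the paper has already established $u\in W^{2,p}_{\mathrm{loc}}$ with H\"older continuous first derivatives, so $|\nabla u|^2$ is continuous up to $\partial\Omega$ and the boundary trace is genuinely zero. Once this regularity is cited, the corollary follows in essentially one line.
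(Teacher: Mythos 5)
Your proof is correct and rests on the same key fact as the paper's: at a solution of Problem (P) one has $u=|\nabla u|=0$ on $\partial\Omega$ (by the $C^{1,\alpha}$ regularity and $u\ge 0$ with $u\equiv 0$ outside $\Omega$), so the boundary-integral formula for $dE$ from the preceding lemma vanishes for every admissible $\mathbf{V}$. If anything, your argument is the more direct verification of the critical-point claim; the paper's proof additionally chooses the specific descent direction $\mathbf{V}\cdot\mathbf{n}=-\partial u_\Sigma/\partial\mathbf{n}$ to show $dE\le 0$ for $\Sigma\subset\Omega$ (which motivates the algorithm) before invoking $\nabla u=0$ at the solution.
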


\begin{proof}
We choose  $\mathbf{V}\cdot \mathbf{n} = - \frac {\partial u_{\Sigma}}{\partial \mathbf{n}}  $  on $ \partial \Sigma $. If $ \Sigma  \subset \Omega$ then $\frac {\partial u_{\Sigma}}{\partial \mathbf{n}}<0$ so we have   $d E(\S,\mathbf{V})\leq 0$ and it means that $E$ is decreasing respect to $V$ and the
 solution of free boundary where $\nabla u=0$, is a critical point of $E$.
\end{proof}


\subsubsection{Second algorithm for  Problem (P).}

\begin{enumerate}
\item Set $ k=0 $, choose an initial domain $ \Sigma_0  $ such that $supp(\mu)\subset \S_0$ and set  $ \Gamma_{0}=\partial \Sigma_0 $.

\item Solve $\Delta u_{k} =  1$ in $\S_k\setminus supp(\mu)$ with Dirichlet  boundary condition $ u_{k} = 0 $  on $ \Gamma_k $,

\item Compute a normal velocity from (2), i.e.
\beas
\mathbf{V}\cdot \mathbf{n} = -  \nabla u_k \cdot \mathbf{n}_{\Gamma_k}.
\eeas

\item Stop if $\Vert \nabla u_k \Vert_{L^2(\Gamma)}$ is sufficiently small.

\item Given  $\Gamma_{k}, $ move the free boundary by Quasi-Newton method, i.e,

In dimension one
\beas
x_{k+1}=x_k-{u'(x_k)}.
\eeas

In dimension two
\beas
\Gamma_{k+1}=\Gamma_{k}-{\nabla u({x}_k)}\cdot I.
\eeas
Obtain the new shape $\Sigma_{k+1}$ with the free boundary $\Gamma_{k+1}$.

\item Set $k=k+1$ and go to (2).

\end{enumerate}

\subsection{Alternative viewpoint.}

One can consider another starting point.  We try to determine a shape $\Omega$ such that
\beas
\frac{\partial u_\Omega}{\partial \mathbf{n}} = 0, \qquad \text{on } \Gamma.
\eeas
In order to derive a suitable weak formulation, we multiply the normal derivative by a smooth
test function $\varphi$ and integrate over $\Gamma$, i.e. we have
$$ \int_\Gamma \frac{\partial u_\Omega}{\partial \mathbf{n}} \varphi~d\sigma = 0. $$
By Gauss' Theorem together with the Poisson equation for $u_\Omega$ we have
\beas
\int_\Omega ( f \varphi + \nabla u_\Omega \cdot \nabla \varphi)~dx = 0, \qquad \forall~\varphi \in H_0^1(\Omega).
\label{eequation1}
\eeas
In other words,  the first optimality condition for $E$ (with respect to $v$) reads
\[
dE(u;\varphi,\O):=dE(u+\varepsilon\,\varphi,\O)\lfloor_{\varepsilon=0}=\int_\Omega ( f \varphi + \nabla u_\Omega \cdot \nabla \varphi)~dx = 0,
\]
for all $u\in H_0^1(\Omega)$. If one consider
\beas
J(\varphi,\O) = \int_\Omega ( f \varphi + \nabla u_\Omega \cdot \nabla \varphi)~dx  \label{edefinition},
\eeas
then  $J(\Omega,.)$ is a continuous linear functional on $H_0^1(\Omega)$, i.e,
it can be interpreted as an element of $H^{-1}(\Omega)$ and we can define an operator
$F(\Omega) = J(\Omega,.)$ mapping into $H^{-1}(\Omega)$ such that \eqref{eequation1} is equivalent to solving
\begin{equation}
F(\Omega) = 0 \qquad \text{in } H^{-1}(\Omega).
\end{equation}
Now we can do all similar calculations for the functional $J$ and deduce same results.

\section{Numerical examples.}
\renewcommand{\theequation}{6.\arabic{equation}}
\setcounter{equation}{0}

\begin{example} Suppose that $\mu=(1+2x^2+y^2)\chi_{B}$ where $B$ is the unit ball. We apply the first method and compute the corresponding quadrature domain for Problem (P). Let $B$ be the initial guess. Figure \ref{f1} depicts the numerical solution after just three iterations and Figure \ref{f2} demonstrates the quantity of $|\nabla u|$ on the boundary of solution. This example confirms that the first method is a fast iterative solver.
\end{example}

\begin{example} In this example the  support of the measure $\mu$ is  a polygon which is shown in Figure \ref{first iteration}. We employ the second method and obtain the corresponding quadrature domain.    Set $\mu=1.5 \chi_P.$ In Figure \ref{first iteration},  the initial guess  is the circle and  this figure  shows the solution after first iteration.  Figure \ref{final iteration} states the  result after four iterations. Figure \ref{cross}  illustrates the norm of the gradient on the boundary of  the solution in forth iteration.

 Now let $\mu=11 \chi_P$. Figure \ref{first iteration2} shows the solution after first iteration  and Figure
\ref{final iteration2} states the final result which is close to a ball. Figure \ref{cross2}  illustrates the quantity
of $\frac{|\nabla u|}{\sqrt{2u}}$ on a cross section line which has been shown in Figure \ref{cross and solution}. This Figure verifies Theorem \ref{ff2}.
\end{example}

\begin{example}
Suppose that  $\mu=t(\chi_{B_1}+2\chi_{B_2})$ is uniformly distributed on two circles $B_1(x_1,1),B_2(x_2,1)$ where $x_1=(-2,0), x_2=(\sqrt{8},0)$. According to Example \ref{example} or Remark \ref{rem1}, we find that if $t=4$ then $B_1$ and $B_2$  touch each other at origin tangentially. Consider the second method and let time increase to $t=5$ and  solve
\be\label{exam2}
\begin{cases}
\Delta u= 1-t(\chi_{B_1}+2\chi_{B_2}),& \mbox{in} \,\,\O,\\
u=0,& \mbox{on} \,\,\pa \O,
\end{cases}
\ee
to get the corresponding quadrature domain.
Figure \ref{s4} shows the solution at  $t=5$ and Figure \ref{grad5} illustrates $|\nabla u|$ for $t=5$.  Figure  \ref{s10} is the solution of similar PDE for $t=6$.
\end{example}

\begin{figure}[!h]
  \centering
   \includegraphics[width=0.7\columnwidth] {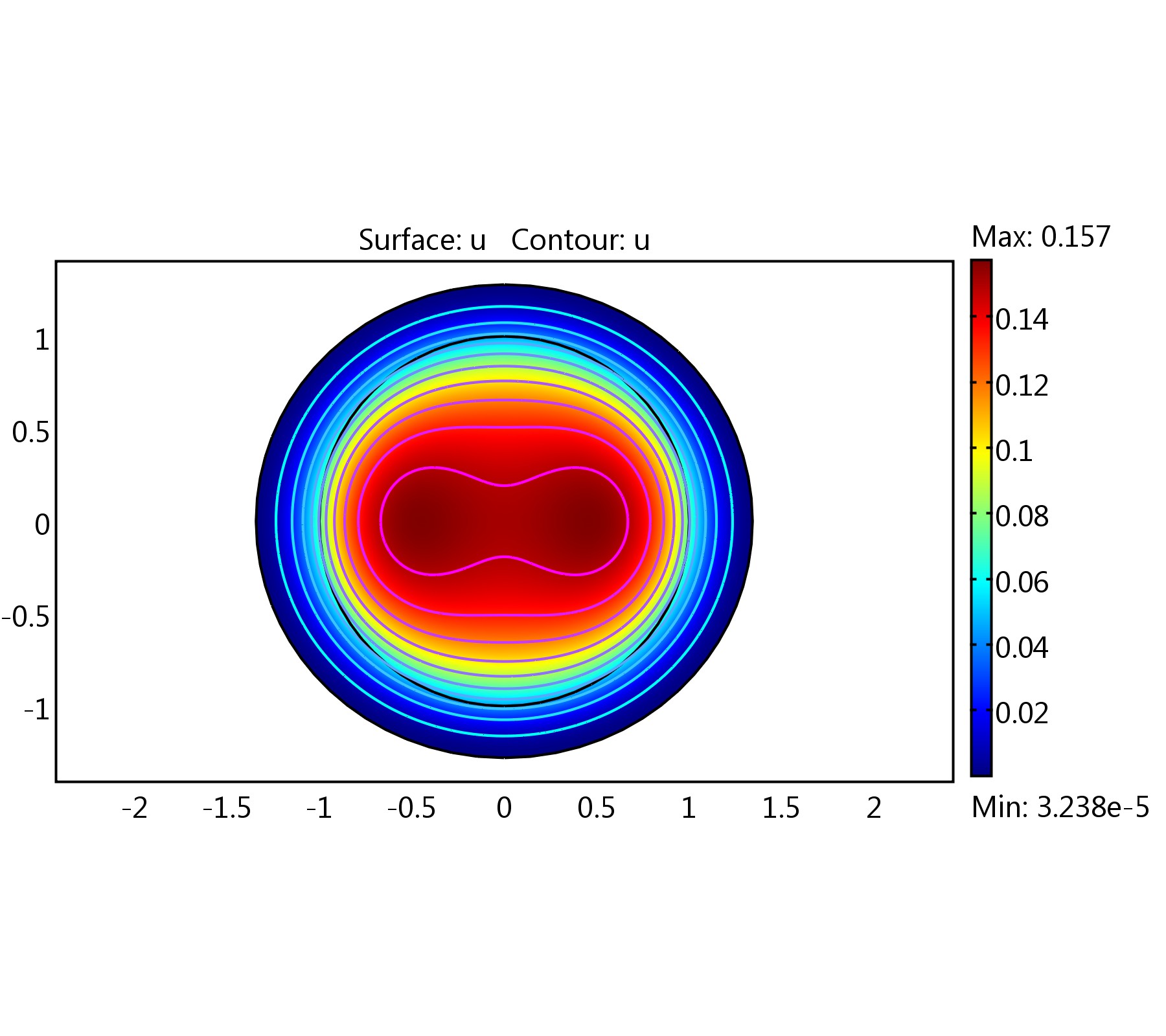}
    \caption{The solution of Problem (P) by employing the first method and considering  $\mu=(1+2x^2+y^2)\chi_{B}$ after three iterations. }
    \label{f1}
\end{figure}

\begin{figure}[!h]
  \centering
   \includegraphics[width=0.7\columnwidth] {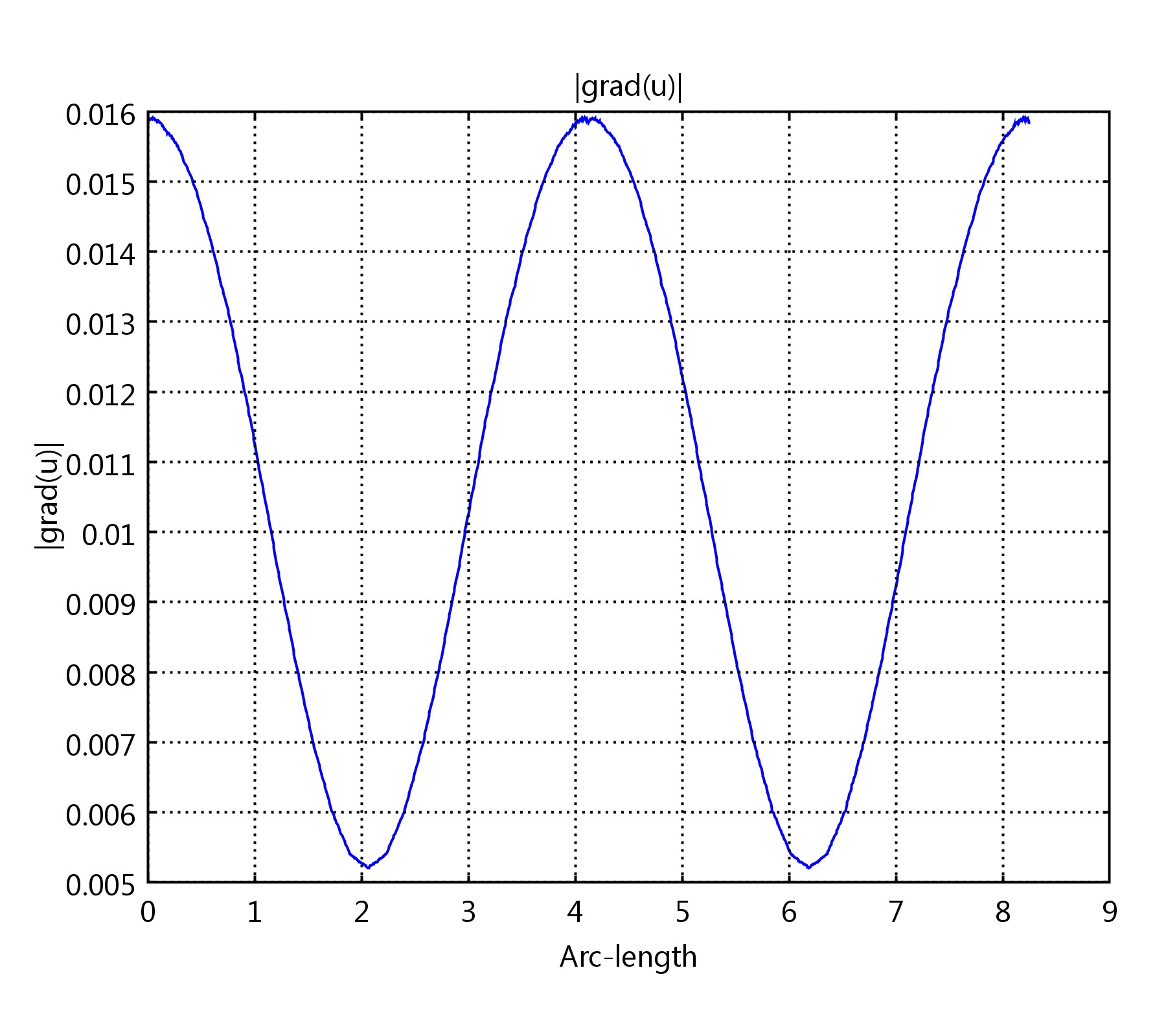}
    \caption{The quantity $|\nabla u|$ on the boundary of the solution in Figure \ref{f1}. }
    \label{f2}
\end{figure}

\begin{figure}[!h]
  \centering
   \includegraphics[width=0.7\columnwidth] {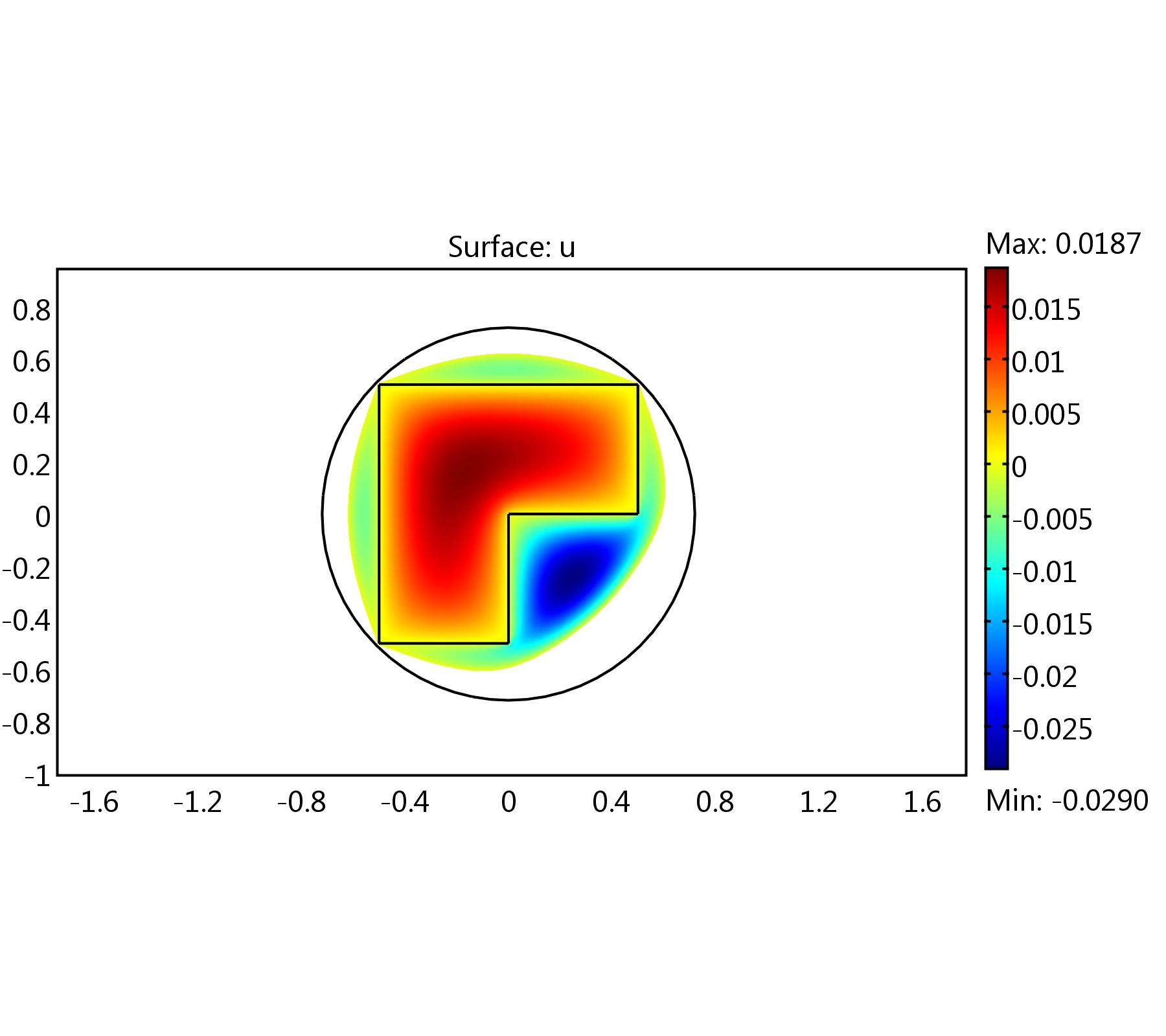}
    \caption{The colored part shows the solution  $\Omega_{1}$, after first  iteration, where support of $\mu$  is the polygon and the initial guess ($\Omega_{0}$) is a ball. }
    \label{first iteration}
\end{figure}

\begin{figure}[!h]
  \centering
   \includegraphics[width=0.7\columnwidth] {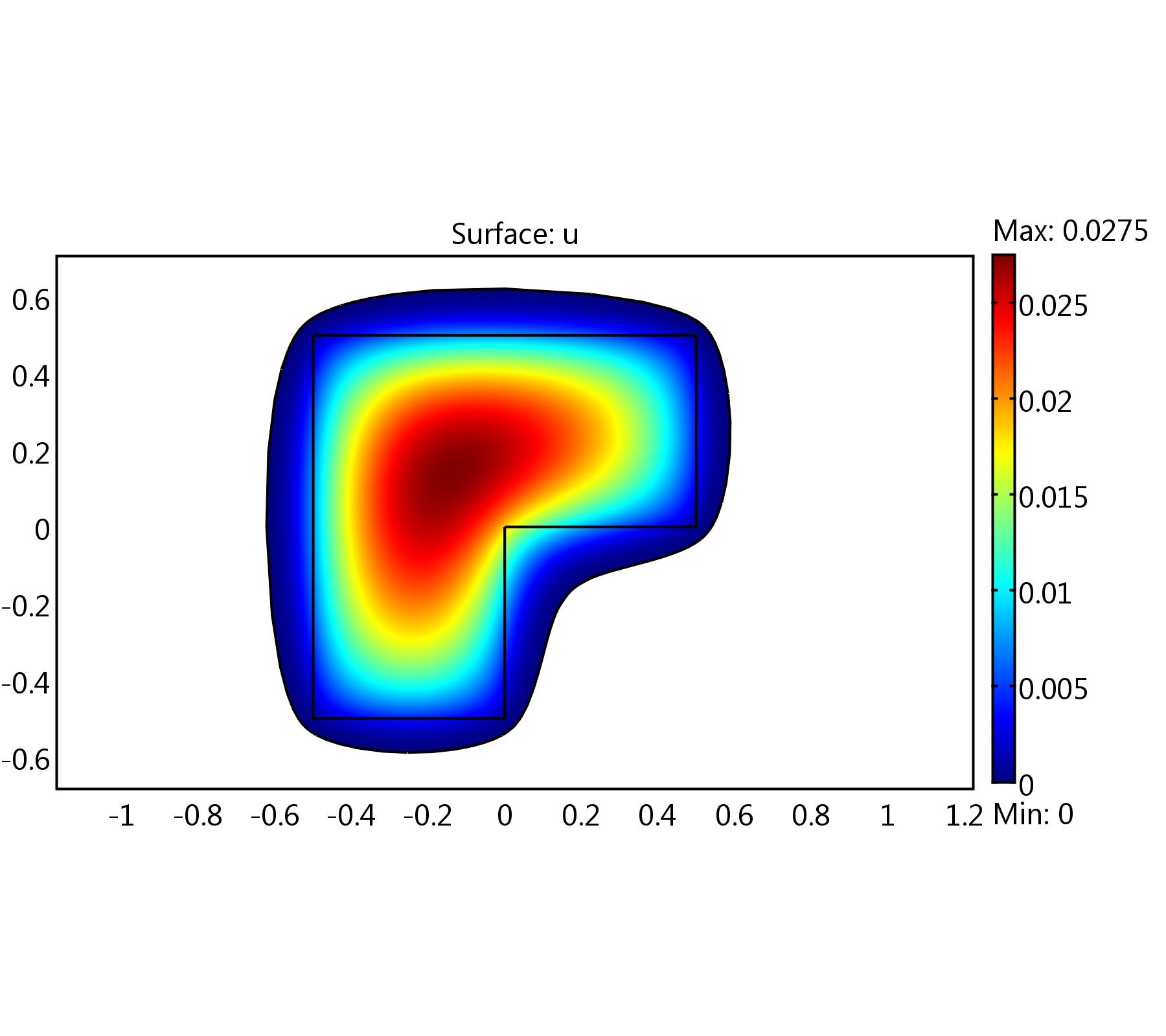}
    \caption{Final domain after four iterations when  $\mu=1.5\chi_P$ and where $P$ is the polygon. }
    \label{final iteration}
\end{figure}


\begin{figure}[!h]
  \centering
   \includegraphics[width=0.7\columnwidth] {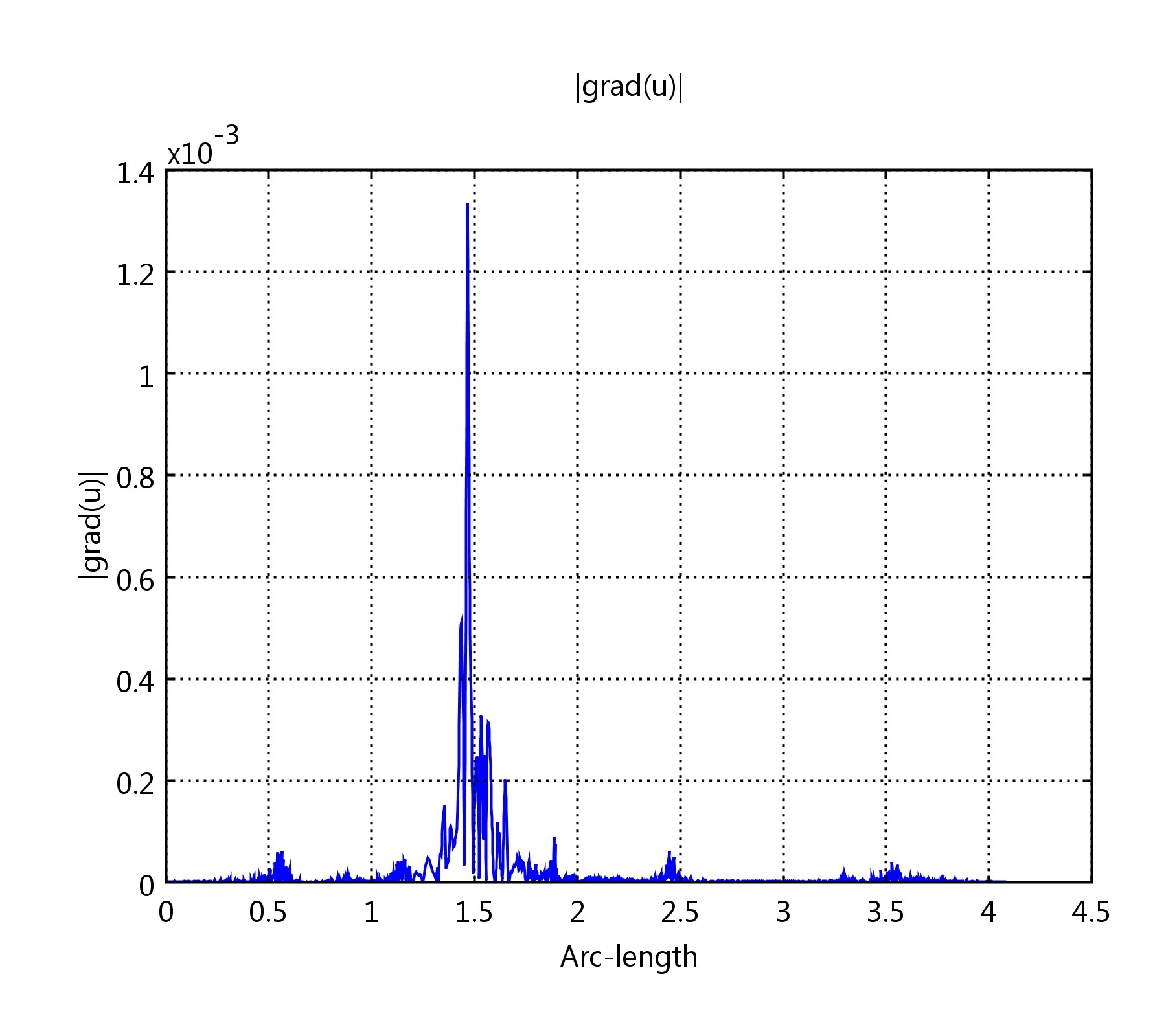}
    \caption{ The value of   $|\nabla u|$ on the boundary of the solution after four iterations.}
    \label{cross}
\end{figure}

\begin{figure}[!h]
  \centering
   \includegraphics[width=0.7\columnwidth] {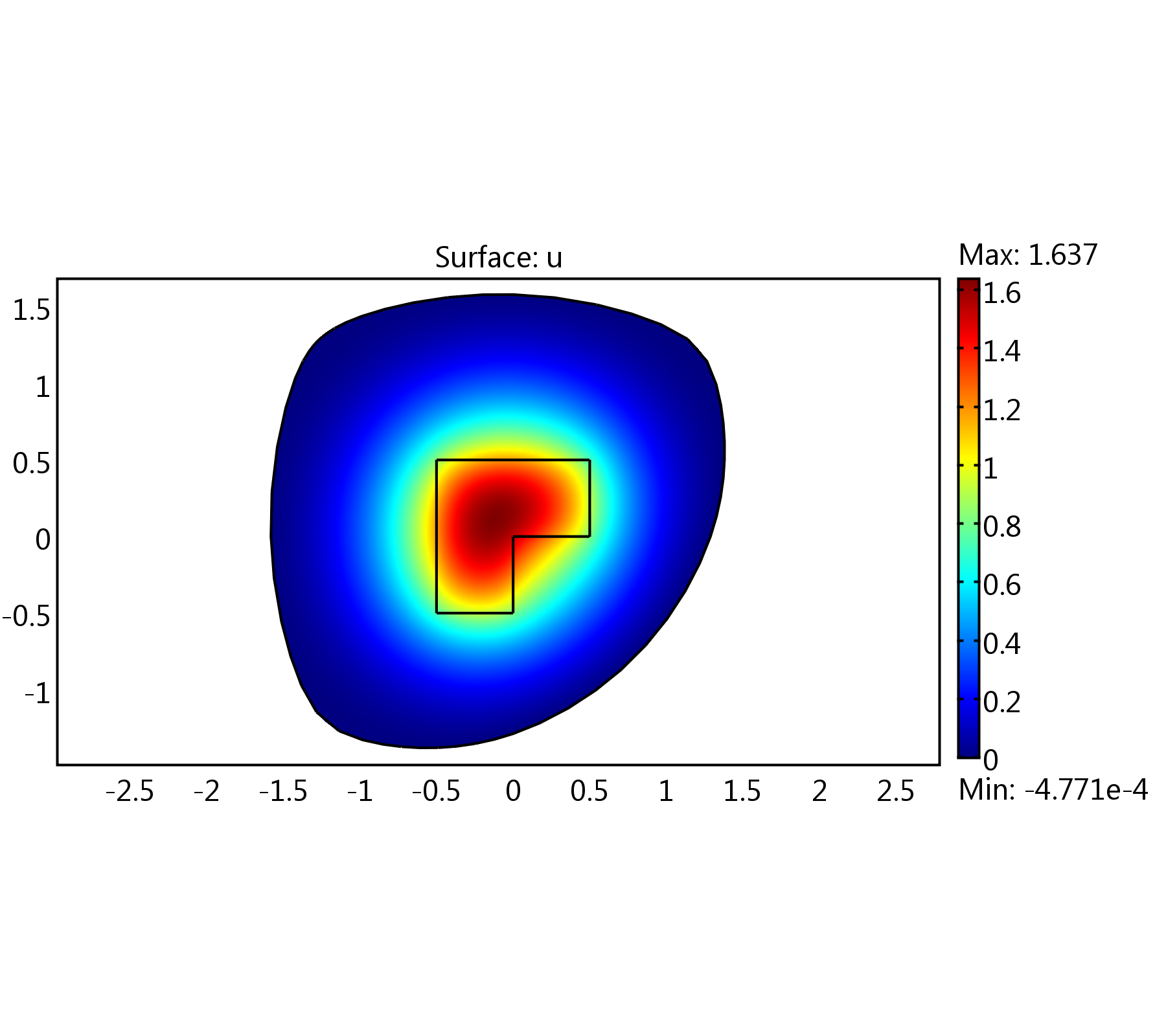}
    \caption{The first iteration for $\mu=11 \chi_P,\,$ where $P$ is the polygon. Initial guess is a ball with center at origin. }
    \label{first iteration2}
\end{figure}

\clearpage

\begin{figure}[!h]
  \centering
   \includegraphics[width=0.7\columnwidth]{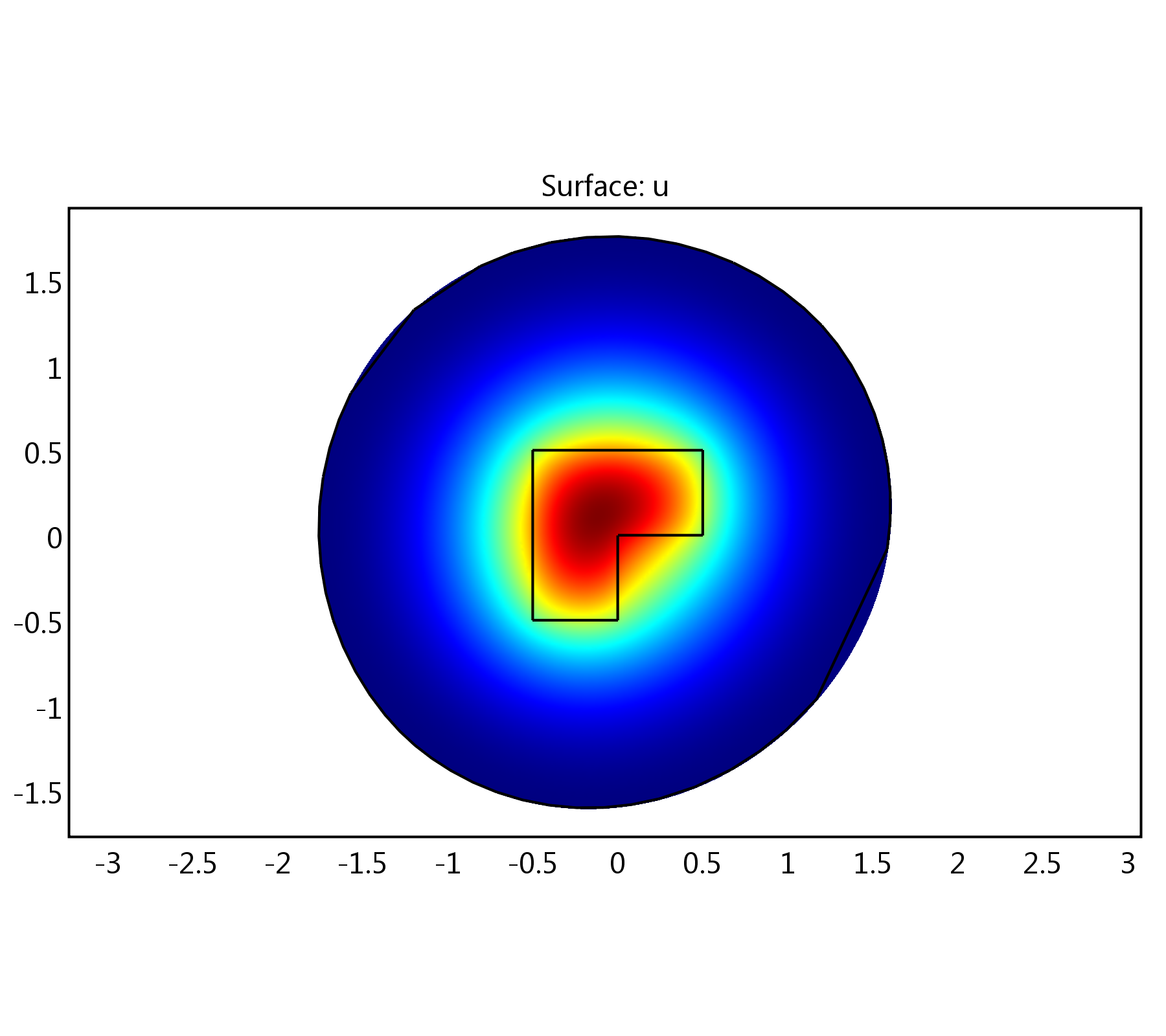}
    \caption{Final quadrature domain when $\mu=11\chi_P$ and  where $P$ is the polygon. }
    \label{final iteration2}
\end{figure}
\begin{figure}[!h]
  \centering
   \includegraphics[width=0.7\columnwidth  ] {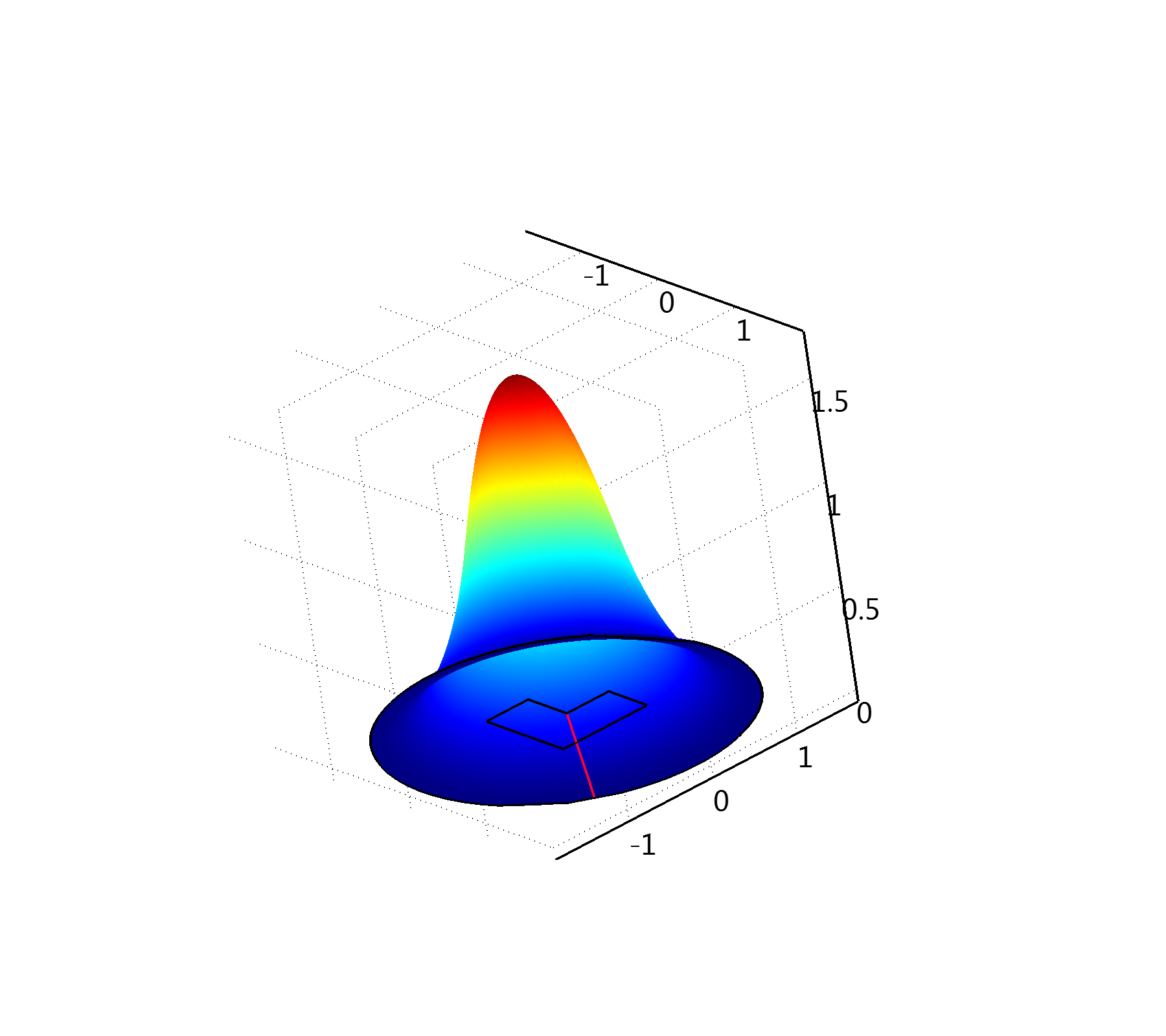}
    \caption{The surface of the solution   $u$  and a  cross section line.}
    \label{cross and solution}

\clearpage

\end{figure}
\begin{figure}[!h]
  \centering
   \includegraphics[width=0.8\columnwidth  ] {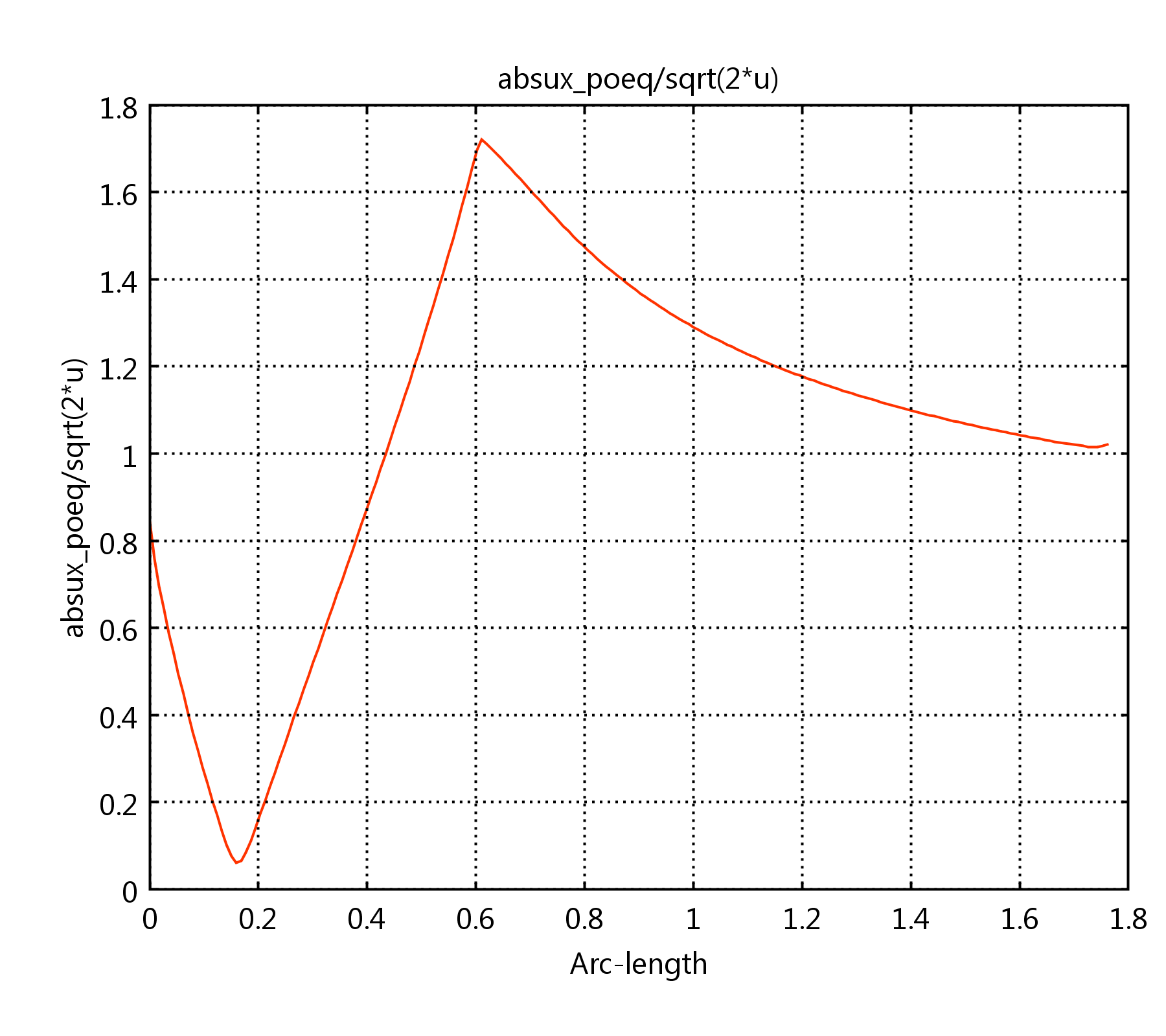}
    \caption{The amount of $\frac{|\nabla u|}{\sqrt{2u}}$ on the cross section line  in  figure (\ref{cross and solution}).}
    \label{cross2}
\end{figure}

\begin{figure}[!h]
  \centering
   \includegraphics[width=0.8\columnwidth] {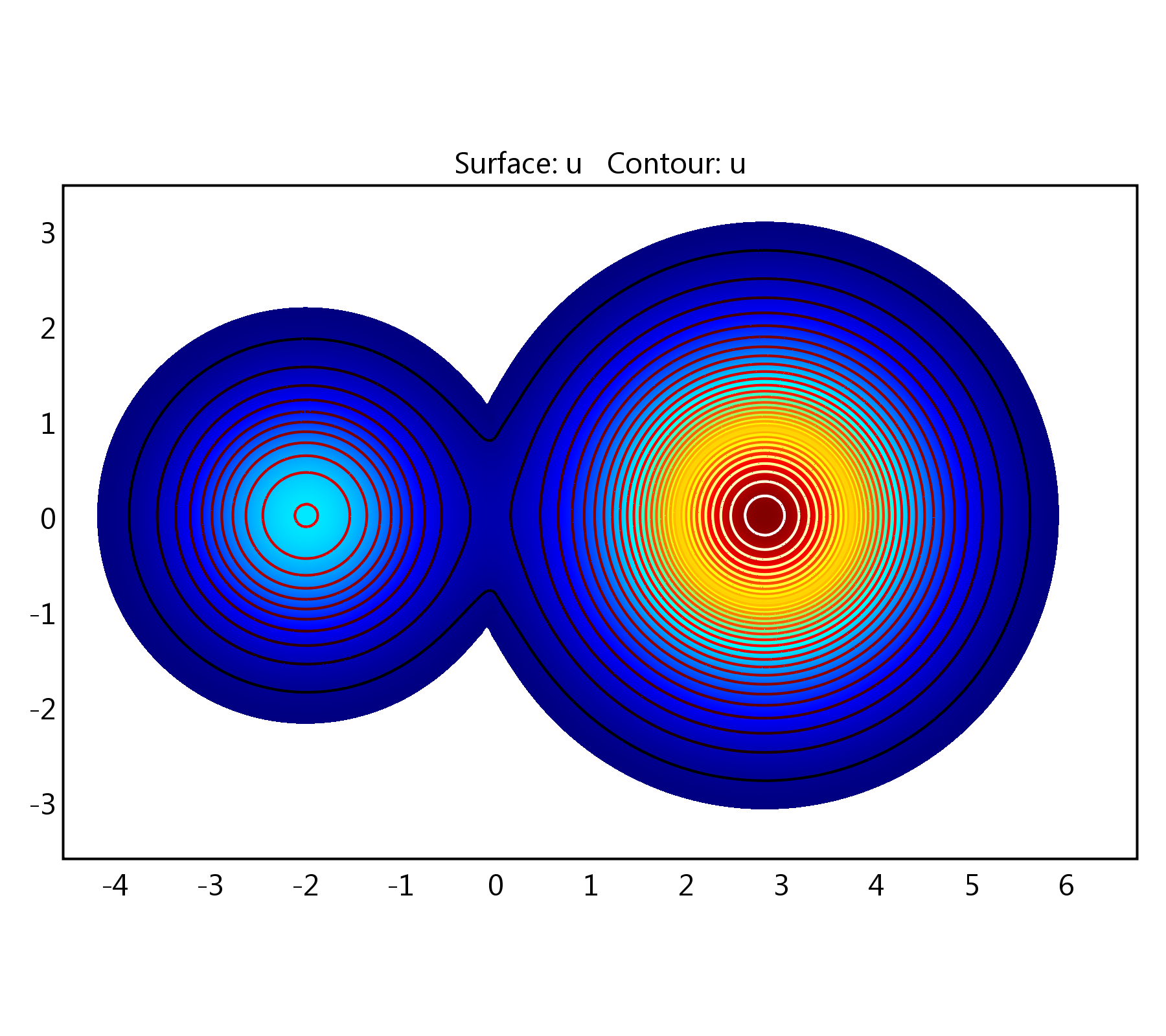}
    \caption{The quadrature domain corresponding to the solution of (\ref{exam2}) for $t=5$.}
    \label{s4}
\end{figure}
\clearpage

\begin{figure}[!h]
  \centering
  \includegraphics[width=0.8\columnwidth] {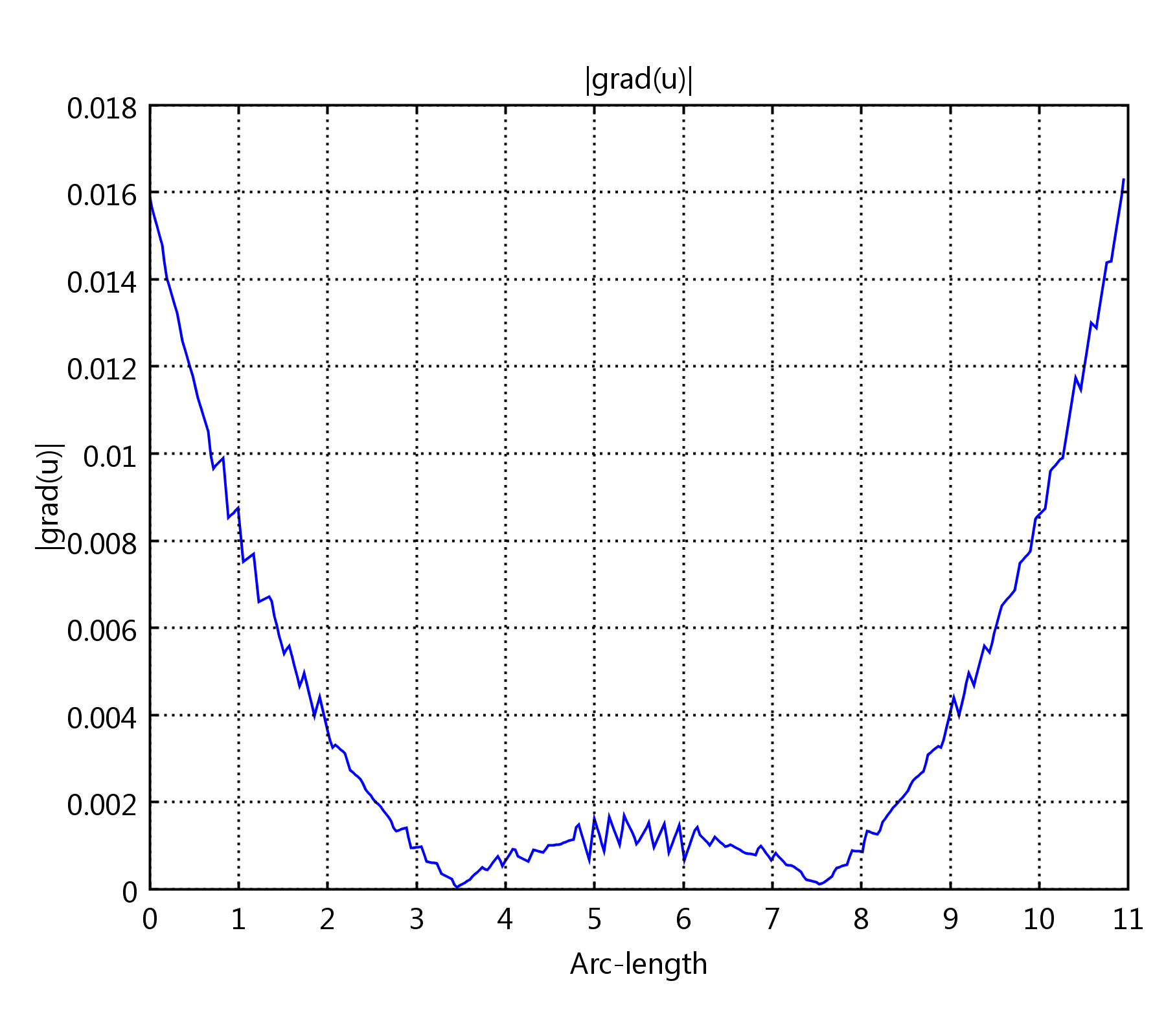}
   \caption{The quantity of $|\nabla u|$ on the boundary of the solution of (\ref{exam2}) for $t=5$.}
    \label{grad5}
\end{figure}

\begin{figure}[!h]
  \centering
   \includegraphics[width=0.8\columnwidth] {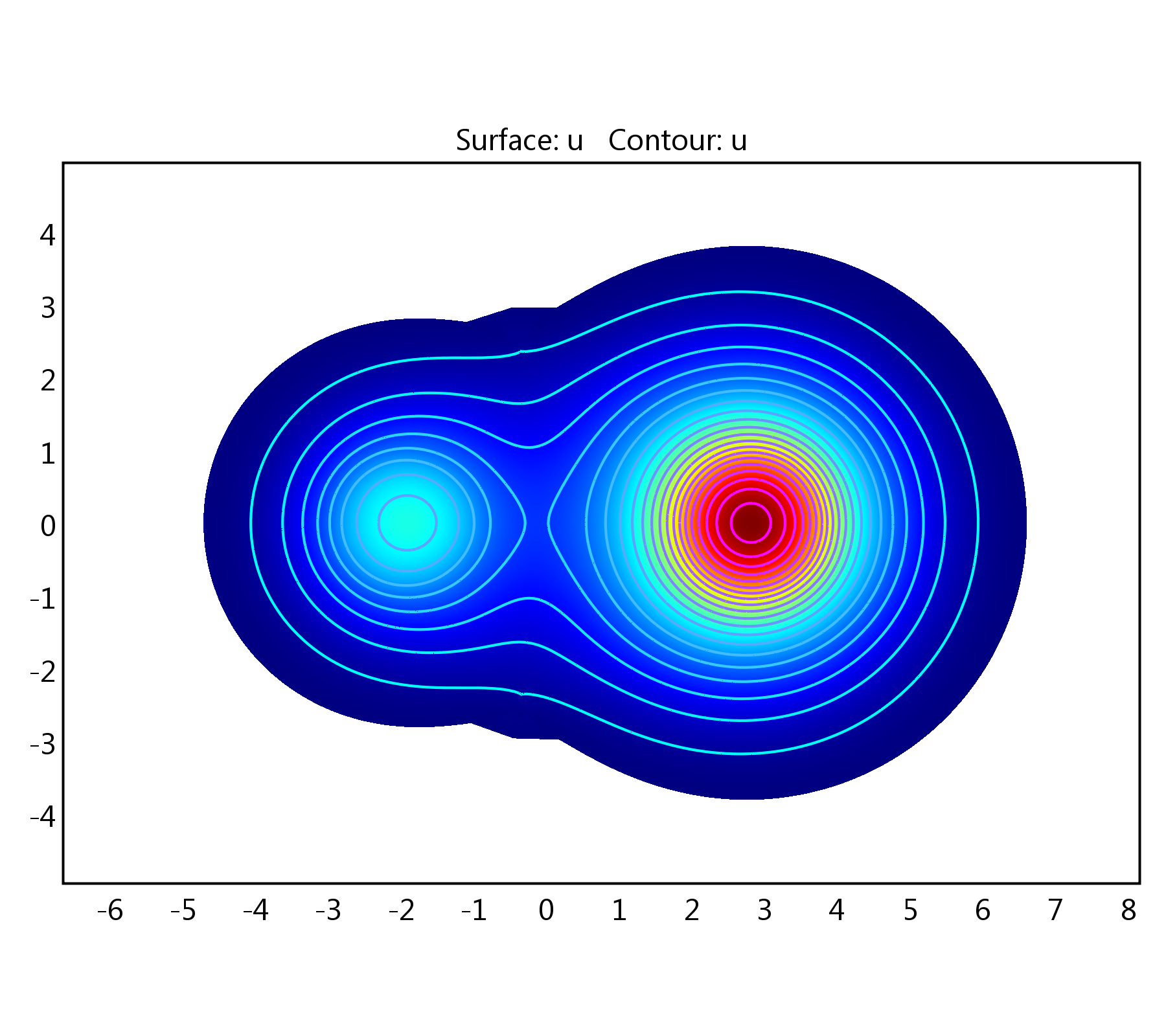}
    \caption{The quadrature domain corresponding to the solution of (\ref{exam2}) for $t=6$.}
    \label{s10}
\end{figure}

\noindent
{\bf  Acknowledgments:} This problem was suggested by Professor Henrik Shahgholian. We wish to  thank him for plentiful discussions and profitable suggestions.

{}
\end{large}
 \end{document}